\newtheorem{thm}{Theorem}[section]
\newtheorem{prop}{Proposition}[section]
\theoremstyle{definition}
\newtheorem{defn}{Definition}[section]
\newtheorem{exmp}[thm]{Example}
\theoremstyle{remark}
\numberwithin{equation}{section}
\numberwithin{equation}{section}
\newcounter{saveeqn}
\newcommand{\be}{\begin{eqnarray}}
\newcommand{\ben}{\begin{eqnarray*}}
\newcommand{\en}{\end{eqnarray}}
\newcommand{\enn}{\end{eqnarray*}}
\title[Symmetric-adjoint and Symplectic-adjoint Methods and Applications]{Symmetric-adjoint and Symplectic-adjoint Methods and Their Applications }
\author{Geng Sun}
\address{Institute of Mathematics, Chinese Academy of Sciences, Beijing, China}
\email{sung@amss.ac.cn}
\author{Siqing Gan}
\address{School of Mathematics and Statistics, Central South University, Changsha, China}
\email{sqgan@csu.edu.cn}
\author{Hongyu Liu}
\address{Department of Mathematics, Hong Kong Baptist University, Kowloon, Hong Kong, China}
\email{hongyu.liuip@gmail.com}
\author{Zaijiu Shang}
\address{Institute of Mathematics, Chinese Academy of Sciences, Beijing, China}
\email{zaijiu@amss.ac.cn}
\date{} 
\begin{document}
\maketitle

\begin{abstract}

Symmetric method and symplectic method are classical notions in the theory of Runge-Kutta methods. They can generate numerical flows that respectively preserve the symmetry and symplecticity of the continuous flows in the phase space. Adjoint method is an important way of constructing a new Runge-Kutta method via the symmetrisation of another Runge-Kutta method. In this paper, we introduce a new notion, called {\it symplectic-adjoint} Runge-Kutta method. We prove some interesting properties of the symmetric-adjoint and symplectic-adjoint methods. These properties reveal some intrinsic connections among several classical classes of Runge-Kutta methods. In particular, the newly introduced notion and the corresponding properties enable us to develop a novel and practical approach of constructing high-order explicit Runge-Kutta methods, which is a challenging and longly overlooked topic in the theory of Runge-Kutta methods.

\medskip

\noindent{\bf Keywords:}~~65L06, 37M15, 65P10

\noindent{\bf 2010 Mathematics Subject Classification:}~~Runge-Kutta method; symmetric and symplectic; adjoint; high-order; explicit method

\end{abstract}

\section{Introduction}

In his book {\it Institutionum calculi integralis} in 1768 \cite{Euler68}, L. Euler introduced a first-order numerical procedure for solving ordinary differential equations (ODEs), which is nowadays known as the {\it Euler method}. It is the most basic explicit method for the numerical integration of initial value problems for ODEs of the form
\begin{equation}\label{eq:ode1}
y'(t)=f(t, y),\quad y(t_0)=y_0.
\end{equation}
More than 100 years later around 1900, the German mathematicians C. Runge and M. W. Kutta developed the nowadays known Runge-Kutta methods. The Runge-Kutta methods are a family of iterative methods for the numerical integration of \eqref{eq:ode1}, and they include the Euler method as a simple and special case. Choosing a step-size $h\in\mathbb{R}_+$, an $s$-stage Runge-Kutta method takes the following form,
\begin{equation}\label{eq:RK1}
y_{n+1}=y_n+h\sum_{i=1}^s b_i k_i,
\end{equation}
where
\begin{equation}\label{eq:RK2}
k_i=f(t_n+c_i h, y_n+h\sum_{j=1}^s a_{ij} k_j), \quad i=1,2,\ldots, s.
\end{equation}
The matrix $\mathbf{A}=(a_{ij})_{i,j=1}^s$ is called the Runge-Kutta matrix, while $\mathbf{b}=(b_i)_{i=1}^s$ and $\mathbf{c}=(c_i)_{i=1}^s$ are known as the weighting and nodal vectors, respectively. These data are usually arranged in a mnemonic device, known as a Buthcer tableau (after J. C. Butcher),
\begin{equation}\label{eq:RK3}
\begin{tabular}{c|cccc}
$c_1$ & $a_{11}$ & $a_{12}$ & $\cdots$ & $a_{1s}$\\
$c_2$ & $a_{21}$ & $a_{22}$ & $\cdots$ & $a_{2s}$\\
$\vdots$ & $\vdots$ & $\vdots$ & $\ddots$ & $\vdots$\\
$c_s$ & $a_{s1}$ & $a_{s2}$ & $\cdots$ & $a_{ss}$\\
\hline
& $b_1$ & $b_2$ & $\cdots$ & $b_s$
\end{tabular}
= \begin{tabular}{c|c}
$\mathbf{c}$ & $\mathbf{A}$\\
\hline
&$\mathbf{b}^T$
\end{tabular}\ \ .
\end{equation}

The study of Runge-Kutta methods has a long and coloured history. In this paper, we are concerned with several classical notions in the theory of Runge-Kutta methods including symmetric method, symplectic method, adjoint method, and explicit/implicit method. Symmetric method can generate numerical flows that preserve the symmetry of the continuous flows in the phase space. Adjoint method is an important way of constructing a new Runge-Kutta method via the symmetrisation of another Runge-Kutta method. It was originated by Scherer \cite{Scherer79} and Butcher \cite{Butcher87} who proposed the notion of reflected Runge-Kutta methods and in \cite{Hairer83, Hairer02}, this class of Runge-Kutta methods is referred to as the adjoint methods, and their properties were studied. In this paper, motived by the symmetric-adjoint method, we introduce a new notion of {\it symplectic-adjoint method} of a Runge-Kutta method. Symplectic method preserves the symplectic structure of a Hamiltonian flow and, it was pioneered by K. Feng \cite{Feng84} and has inspired a flourishing branch of numerical mathematics called {\it geometric numerical integration} (cf. \cite{Hairer02}). We discover some novel and interesting properties about the symmetric-adjoint and symplectic-adjoint methods. On the one hand, those properties are of independent mathematical interest and they actually reveal some beautiful and intriguing connections among several classes of classical Runge-Kutta methods. On the other hand, they can be used to significantly simplify the order conditions of Runge-Kutta methods and pave the way for the practical construction of high-order explicit methods.

For a Runge-Kutta method of the form \eqref{eq:RK3}, if $\mathbf{A}$ is a strictly lower-triangular matrix, namely $a_{ij}=0$ for $i\leq j$, then the Runge-Kutta method \eqref{eq:RK3} is called {\it explicit}, otherwise it is called {\it implicit}. It is widely known that implicit Runge-Kutta methods possess many fine properties which make them particularly suitable for the numerical integration of stiff ODEs and Hamiltonian systems. There are many research monographs on the implicit Runge-Kutta methods including systematic frameworks for the constructions and the corresponding properties \cite{Butcher87,Hairer83, Hairer91, Hairer02, Stetter73}. There are three classical classes of implicit Runge-Kutta methods, including the Gauss-type, Radau-type and Lobatto-type methods. The Gauss-type methods are of stage $s$ and order $2s$, and they are the unique class of methods that possess such a stage-order relationship \cite{Butcher642,Ceschino66}. The Radau-type and Lobatto-type methods are, respectively, of stage $s$ and orders $2s-1$ and $2s-2$. However, the theory and study of explicit Runge-Kutta methods are not as rich as the implicit ones. The most widely known member of the Runge-Kutta family is generally referred to as ``RK4'', which is an explicit method of stage $4$ and order $4$. Kutta tried but failed to construct a $5$-stage and $5$-order explicit method, and then he managed to construct a $6$-stage and $5$-order explicit method \cite{Kutta}. There was an error in Kutta's construction and was later fixed by Nystr\"om \cite{Nystrom25}. It is until 1960's that it was proved that there does not exist explicit Runge-Kutta method which is of order $5$ and stage $5$ \cite{Butcher64,Butcher65,Ceschino66,Shanks66}. In addition, Butcher constructed explicit methods that are of stage $7$ and order $6$, and he proved that there does not exist explicit method of stage $8$ and order $7$, and further claimed the existence of such methods in stage $9$ and order $7$, which was indeed constructed by Verner \cite{Verner71}. In 1985, Butcher \cite{Butcher85} proved the nonexistence of explicit Runge-Kutta method of stage $10$ and order $8$, which is known as the Butcher's order barrier. There is also some significant progress by the other researchers on the construction of explicit Runge-Kutta methods:  Curtis \cite{Curtis70} and Copper \& Verner \cite{Cooper72} constructed explicit methods of stage 11 and order 8; Curtis \cite{Curtis75} constructed explicit method of stage 18 and order 10; Hairer \cite{Hairer78} obtained explicit method of stage 17 and order 10. Since then, there is very little progress on the construction of high-order explicit Runge-Kutta methods. This is in sharp contrast from that for implicit methods and indeed according to our earlier discussion, one can easily obtain an implicit method of an arbitrary order. The major difficulty of constructing high-order explicit methods comes from the enormous number of order conditions as the stage and order of the Runge-Kutta method increase. In Table~1, we list the involving numbers, where $p$ stands for the order, $s$ stands for the stage, $m$ stands for the number of entries of an explicit Runge-Kutta method, and $N$ signifies the number of order conditions imposed on those $m$ entries. It can be seen that if one intends to construct a high-order explicit method, one needs to deal with a huge number of (nonlinear) algebraic order conditions satisfied by its entries (in a comparably much small number). Solving the aforesaid extremely over-determined nonlinear system would be fraught with immense difficulties, and it is a major reason that the study of constructing high-order explicit Runge-Kutta methods has been longly overlooked since the earlier mentioned efforts. 
\begin{table}[t]
\centering
\caption{}\label{tab:aStrangeTable}
\begin{tabular}{|c|c|c|c|c|c|c|c|c|c|c|c|c|c|c|}
\hline $p$ &1&2&3&4&5&6&7&8&9&10&11&12&13&14\\
\hline $s$ &1&2&3&4&6&7&9&11& & & & & &\\
\hline $m$ &1&3&6&10&21&28&45&66& & & & & & \\
\hline  $N$& 1&2&4&8&17&37&85&200&486&1205&3047&7813&20300&53264\\
\hline
\end{tabular}
\end{table}

It is one of the main aims of the current article to develop a much feasible framework in constructing high-order explicit Runge-Kutta methods. By carefully analyzing those order conditions, we discover some common features that can significantly reduce the redundancies. In fact, it is such an observation that motivates us to introduce the notion of symplectic-adjoint method for a Runge-Kutta method. This together with the classical symmetric-adjoint method provides the right tools for simplifying the order conditions to a manageable level. However, as remarked earlier, our study of the symplectic-adjoint and symmetric-adjoint methods is of significant mathematical values for its own sake.

The rest of the paper is organized as follows. In Section 2, we present some preliminary results on the order conditions, symmetric and symplectic methods. Section 3 is devoted to the analysis of symmetric-adjoint and symplectic-adjoint methods. In Section 4, we consider the construction of high-order explicit Runge-Kutta methods and construct a class of explicit methods in stage 6 and order 5.

\section{Order conditions, symmetric and symplectic Runge-Kutta methods}

Associated with the Runge-Kutta method in \eqref{eq:RK3}, we introduce the following so-called simplified order conditions \cite{Butcher642},
\begin{align}
B(p):& \sum_{i=1}^{s}b_ic_i^{k-1}=\frac{1}{k}, k=1,...,p;\label{eq:order1}\\
C(\eta):& \sum_{j=1}^{s}a_{ij}c_j^{k-1}=\frac{1}{k}c_i^k, i=1,...,s, k=1,...,\eta;\label{eq:order2}\\
D(\zeta):& \sum_{i=1}^{s}b_ic_i^{k-1}a_{ij}=\frac{1}{k}b_j(1-c_j^k), j=1,...,s, k=1,...,\zeta. \label{eq:order3}
\end{align}
There holds the following result (cf. \cite{Butcher642,Hairer83}),

\begin{thm}\label{thm:1}
If the coefficients of a Runge-Kutta method of the form \eqref{eq:RK3} satisfies $B(p), C(\eta)$ and $D(\zeta)$ with $p\leq \eta+\zeta+1$ and $p\leq 2\eta+2$, then the method is of order $p$.
\end{thm}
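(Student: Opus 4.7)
The standard framework is Butcher's rooted-tree calculus. For each rooted tree $\tau$ with $\rho(\tau)\le p$ vertices, the Runge-Kutta method \eqref{eq:RK3} produces an elementary weight
\[
\Phi(\tau)=\sum_{i_1,\ldots,i_{\rho(\tau)}} b_{i_1}\prod_{(v,w)\in E(\tau)} a_{i_v i_w},
\]
and the method has order $p$ iff $\Phi(\tau)=1/\gamma(\tau)$ for every such $\tau$, where $\gamma(\tau)$ denotes the density. The plan is to show that $B(p)$, $C(\eta)$, $D(\zeta)$ jointly force this identity on every tree of order $\le p$ by a structural induction on $\tau$.

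First I would recast $C(\eta)$ and $D(\zeta)$ as ``pruning'' rules on trees. The identity $C(k)$ for $k\le \eta$ states that whenever a non-root vertex $i$ carries a chain-subtree of length $k$ (decorated with $c$-powers), the inner sum $\sum_j a_{ij}c_j^{k-1}$ collapses to $c_i^k/k$; graphically, a chain-subtree beneath a non-root vertex can be contracted, decrementing the total order by $k$. Symmetrically, $D(k)$ for $k\le \zeta$ says that a chain-subtree of height $k$ attached at the root can be peeled off by replacing $\sum_i b_i c_i^{k-1} a_{ij}$ with $b_j(1-c_j^k)/k$. Finally, $B(p)$ directly handles a pure chain of length at most $p$.

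The induction proceeds on $\rho(\tau)$. Given $\tau$ I would evaluate $\Phi(\tau)$ from the leaves inward: subtrees whose height does not exceed $\eta$ and which hang below a non-root vertex are collapsed by $C(\eta)$, producing $c$-powers at the attachment point; once every non-root subtree has been so reduced, $\tau$ has been converted to a bouquet of chains hanging from the root, and one of those chains is peeled off by $D(\zeta)$ provided its height does not exceed $\zeta$. The remainder has strictly smaller order and is handled by the induction hypothesis, while an irreducible pure chain is dispatched by $B(p)$. The algebraic verification that the value after each reduction is consistent with $1/\gamma(\tau)$ uses the multiplicative recursion $\gamma(\tau)=\rho(\tau)\prod_j \gamma(\tau_j)$.

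The main obstacle I anticipate is the combinatorial bookkeeping: proving that at every stage of the recursion at least one of the two reductions is applicable. This is precisely where the inequalities $p\le \eta+\zeta+1$ and $p\le 2\eta+2$ are consumed. The former ensures that after the interior subtrees have been collapsed by $C(\eta)$, the chain remaining at the root has height at most $\zeta$, so that $D(\zeta)$ applies; the latter handles ``bushy'' trees whose root has several non-trivial children by forcing at least one child's subtree to have height at most $\eta$, so that $C(\eta)$ can even start the reduction. Verifying these two bounds uniformly over all rooted trees of order $\le p$ is the genuinely combinatorial heart of the argument; once that is done, the remaining work is a direct algebraic identification of the reduced elementary weight with $1/\gamma(\tau)$.
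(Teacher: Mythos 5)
You should first note that the paper does not actually prove this theorem: it is quoted as a classical result with the citation ``(cf.\ \cite{Butcher642,Hairer83})'', so your plan must be measured against the classical Butcher/Hairer--N{\o}rsett--Wanner argument, whose skeleton you have correctly identified (rooted-tree calculus, $C(\eta)$ acting inside the tree, $D(\zeta)$ acting at the root, $B(p)$ finishing, the two inequalities entering through a counting argument). However, your translation of the simplifying assumptions into tree reductions is wrong in a way that would break the proof. The factor $c_j^{k-1}$ in $C(k)$ is the elementary weight of $k-1$ \emph{leaves} (single-vertex children) attached at $j$, not of a chain of length $k$: a chain contributes nested sums $\sum_j a_{ij}\sum_l a_{jl}\cdots$, never $\sum_j a_{ij}c_j^{k-1}$ directly. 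Consequently the correct collapsing criterion is the \emph{order} (number of vertices) of the subtree hanging at a vertex, not its \emph{height}: one proves by a leaves-inward induction that $C(\eta)$ forces the internal weight of every subtree $t$ with $\rho(t)\le\eta$ to equal $c_j^{\rho(t)}/\gamma(t)$. With your height-based rule the bookkeeping fails on concrete trees: for $p=6$, $\eta=2$, $\zeta=3$ (admissible, since $6\le\eta+\zeta+1$ and $6\le 2\eta+2$), take the tree whose root has a single child carrying four leaves; its interior subtree has height $2\le\eta$ but order $5>\eta$, so $C(2)$ cannot collapse it, and the classical proof instead applies $D(1)$ at the root. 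This also corrects the roles you assign to the hypotheses: $p\le 2\eta+2$ guarantees that at most \emph{one} root branch has order $\ge\eta+1$ (two such branches plus the root would give order $\ge 2\eta+3>p$), not that ``at least one child's subtree has height at most $\eta$''; and $p\le\eta+\zeta+1$ guarantees that the exponent $k=\rho(\tau)-\rho(u)$ accompanying the lone un-collapsible branch $u$ satisfies $k\le p-(\eta+1)\le\zeta$, so that $D(k)$ is available.

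There is a second genuine gap: your induction on $\rho(\tau)$ is not well-founded. The $D(k)$ step replaces $\sum_{i,j}b_ic_i^{k-1}a_{ij}\phi_j(u)$ by $\tfrac1k\bigl(\Phi(u)-\Phi(\bar u)\bigr)$, where $\bar u$ is $u$ with $k$ extra leaves adjoined at its root; one has $\rho(\bar u)=\rho(\tau)$, so one of the two trees produced has the \emph{same} order as $\tau$, and your claim that ``the remainder has strictly smaller order'' is false for it. The classical argument needs a secondary induction quantity (for instance the order of the largest root branch, which strictly decreases under the reduction) to guarantee termination. Since your sketch explicitly defers ``proving that at every stage at least one of the two reductions is applicable,'' and since that is exactly where your height/order confusion and the non-terminating induction live, the proposal as written is an outline of the right classical strategy with its two load-bearing steps --- the internal-weight lemma for $C(\eta)$ and a well-founded termination argument for the $D$-reduction --- missing or incorrect.
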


By Theorem~\ref{thm:1}, for the construction of a Runge-Kutta method with a specific order of accuracy, it suffices to consider those order conditions \eqref{eq:order1}--\eqref{eq:order3} for its coefficients. However, it is noted that an explicit Runge-Kutta method at most satisfies $C(1)$ and $D(1)$, and hence Theorem~\ref{thm:1} does not apply to the construction of high-order explicit Runge-Kutta methods.

Next, we introduce the so-called test equation,
\begin{equation}\label{eq:test1}
y'=\lambda y,\quad y(0)=y_0\quad \mbox{with\ \ $\lambda\in\mathbb{C}$ and $\Re\lambda\leq 0$}.
\end{equation}
By applying the Runge-Kutta method $(\mathbf{A}, \mathbf{b},\mathbf{c})$ \eqref{eq:RK3} to numerically solve the test equation \eqref{eq:test1} with a step-size $h\in\mathbb{R}_+$, one has that
\begin{equation}\label{eq:sf1}
y_{n+1}=R(z) y_n,\quad z=\lambda h\in\mathbb{C},
\end{equation}
where
\begin{equation}\label{eq:sf2}
R(z)=\frac{\mathrm{det}(I+z(\mathbf{e}\mathbf{b}^T-\mathbf{A}))}{\mathrm{det}(I-z\mathbf{A})},
\end{equation}
with $\mathbf{e}=(1,1,\ldots,1)\in\mathbb{R}^s$, is known as the stability function for the Runge-Kutta method. Then the set $S:=\{z\in\mathbb{C}; |R(z)|\leq 1\}$ is called the stability region of the Runge-Kutta method, and if $S\supset\mathbb{C}^-$, then the method is called $A$-stable. It is known that explicit Runge-Kutta methods are not $A$-stable (cf. \cite{Hairer91}). If an $A$-stable Runge-Kutta method further satisfies that $\lim_{|z|\rightarrow+\infty} R(z)=0$, then the method is called $L$-stable. We also recall the algebraic stability of a Runge-Kutta method $(\mathbf{A},\mathbf{b}, \mathbf{c})$ if its coefficients satisfy the following two conditions (cf. \cite{Burrage79, Crouzeix79}):
\begin{enumerate}
\item $b_i\geq 0,\ \ \ i=1,2,\ldots, s$;

\item The matrix $M:=(m_{ij})_{i,j=1}^s$ is nonnegative, where
\begin{equation}\label{eq:M}
m_{ij}=b_ia_{ij}+b_ja_{ji}-b_ib_j,\ \ i,j=1,2,\ldots, s.
\end{equation}
\end{enumerate}
It is known that algebraic stability implies $A$-stability for a Runge-Kutta method.

In what follows, we set
\begin{equation}\label{eq:n1}
C(q)_i=\sum_{j=1}^{s}a_{ij}c_j^{q-1}-\frac{1}{q}c_i^q,\quad i=1,...,s,\quad q\in\mathbb{N}.
\end{equation}
Then for any $q\in\mathbb{N}$, $\mathcal{C}_q:=(C(q)_i)_{i=1}^s\in\mathbb{R}^s$, and the simplified order condition $C(\eta)$ actually means that $\mathcal{C}_q=0$ for $q=1,2,\ldots,\eta$. In a similar manner, we introduce $D(q)_i$ and $\mathcal{D}_q$. We have
\begin{prop}\label{prop:1}
Suppose that the order conditions $B(p), C(\eta)$ and $D(\zeta)$ hold for $\eta, \zeta< p$. Then for any given $q\leq \eta$, there exists $i_q$ such that $C(q)_{i_q}$ can be expressed in terms of $\{C(q)_i\}_{i=1, i\neq i_q}^s$. Similarly, for any given $r\leq \zeta$, there exists $i_r$ such that $D(r)_{i_r}$ can be expressed in terms of $\{D(r)_i\}_{i=1, i\neq i_r}^s$.
\end{prop}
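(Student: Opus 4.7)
The plan is to show that $B(p)$ and $D(\zeta)$ alone force the linear dependence
\[
\sum_{i=1}^{s} b_i\, C(q)_i = 0, \qquad q \leq \eta,
\]
and, dually, that $B(p)$ and $C(\eta)$ alone force
\[
\sum_{j=1}^{s} D(r)_j = 0, \qquad r \leq \zeta.
\]
Once these scalar identities are in hand, any index $i_q$ with $b_{i_q}\neq 0$ (which exists because $B(p)$ at $k=1$ gives $\sum_i b_i = 1$) lets me solve the first identity for $C(q)_{i_q}$ in terms of the remaining $C(q)_i$'s, and any index $i_r$ works for the second identity. These identities are to be read as algebraic relations on the coefficients $(\mathbf{A},\mathbf{b},\mathbf{c})$, independent of whether $C(\eta)$ or $D(\zeta)$ actually make the residuals vanish.

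To derive the first identity I would substitute the definition $C(q)_i = \sum_j a_{ij}c_j^{q-1} - c_i^q/q$, multiply by $b_i$, sum over $i$, and interchange the order of summation in the resulting double sum. The inner sum $\sum_i b_i a_{ij}$ is exactly the $k=1$ instance of $D(\zeta)$, so it may be replaced by $b_j(1-c_j)$. What remains is a linear combination of the power sums $\sum_j b_j c_j^{q-1}$, $\sum_j b_j c_j^q$ and $\sum_i b_i c_i^q$; since $q \leq \eta < p$ guarantees $q+1 \leq p$, these are evaluated explicitly by $B(p)$, and a short algebraic check shows the total vanishes. The dual identity is obtained symmetrically: sum $D(r)_j$ over $j$, use $C(1)$ to replace $\sum_j a_{ij}$ by $c_i$, and then apply $B(p)$ at $k=1$ and $k=r+1$ (available because $r \leq \zeta < p$); the same sort of cancellation yields $\sum_j D(r)_j = 0$.

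The manipulations are elementary, so I do not expect a conceptual obstacle; the main thing to watch is bookkeeping on the admissible index ranges. The arguments implicitly invoke $D(1)$ and $C(1)$, which are available under the tacit assumption $\eta,\zeta \geq 1$, and they invoke $B(p)$ up to moment $q+1$ or $r+1$, which is guaranteed by $q \leq \eta < p$ and $r \leq \zeta < p$. The selection of $i_q$ is clean once $B(1)$ rules out the degenerate possibility that every $b_i$ vanishes; the selection of $i_r$ is unconstrained because every coefficient in the second identity equals $1$.
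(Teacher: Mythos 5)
Your proposal is correct and follows essentially the same route as the paper: both derive the scalar identities $\sum_{i=1}^{s} b_i\,C(q)_i = 0$ (via the $k=1$ case of $D(\zeta)$ and the moments of $B(p)$ up to $k=q+1$) and $\sum_{j=1}^{s} D(r)_j = 0$ (via $C(1)$ and $B(p)$ up to $k=r+1$), and then solve each identity for one residual. Your explicit remark that $i_q$ must be chosen with $b_{i_q}\neq 0$, guaranteed by $\sum_i b_i = 1$, is a detail the paper's proof leaves implicit, but it does not change the argument.
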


\begin{proof}
The proof follows directly from the following two identities,
\begin{equation*}
\begin{aligned}
\sum_{i=1}^{s}b_iC(q)_i&=\sum_{i=1}^sb_i(\sum_{j=1}^{s}c_j^{q-1}a_{ij}-\frac{1}{q}c_i^q)\\
&=\sum_{j=1}^{s}(\sum_{i=1}^{s}b_ia_{ij})c_j^{q-1}-\frac{1}{q}\sum_{i=1}^{s}b_ic_i^q\\
&=\sum_{j=1}^sb_j(1-c_j)c_j^{q-1}-\frac{1}{q(q+1)}\\
&=(\frac{1}{q}-\frac{1}{q+1})-\frac{1}{q(q+1)}=0,
\end{aligned}
\end{equation*}
and
\begin{equation*}
\begin{aligned}
\sum_{j=1}^{s}D(r)_j&=\sum_{j=1}^{s}(\sum_{i=1}^{s}b_ic_i^{r-1}a_{ij}-\frac{1}{r}b_j(1-c_j^r))\\
&=\sum_{i=1}^{s}b_ic_i^r-\frac{1}{r}(1-\frac{1}{r+1})\\
&=\frac{1}{r+1}-\frac{1}{r+1}=0.
\end{aligned}
\end{equation*}
\end{proof}

Next, we introduce the symmetric Runge-Kutta method and its related order conditions.
\begin{defn}\label{def:symmetric}
A Runge-Kutta method is said to be symmetric if its stability function satisfies $\Phi^{-1}=\Phi$.
\end{defn}

\begin{thm}[\cite{Stetter73, Wanner73}]\label{thm:symmetric1}
For an $s$-stage Runge-Kutta method $(\mathbf{A, b, c})$, if its coefficients satisfy the following relationships,
\begin{equation}
a_{ij}=b_{s+1-j}-a_{s+1-i,s+1-j}, \quad b_j=b_{s+1-j}\quad i,j=1,...,s,
\end{equation}
then the method is symmetric. Moreover, explicit Runge-Kutta method are not symmetric.
\end{thm}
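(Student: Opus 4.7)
The plan is to recognise the two displayed coefficient identities as saying exactly that the scheme coincides with its adjoint Runge-Kutta method, whereupon symmetry in the sense of Definition~\ref{def:symmetric} is automatic; the non-existence of explicit symmetric methods will then drop out from a one-line check on the diagonal.

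First I would write down the adjoint $(\mathbf{A}^\ast,\mathbf{b}^\ast,\mathbf{c}^\ast)$ of an $s$-stage method $(\mathbf{A},\mathbf{b},\mathbf{c})$, which is obtained by applying the scheme from $y_{n+1}$ with step $-h$ and solving for $y_n$. Re-indexing the internal stages via $k_i\leftrightarrow k_{s+1-i}$ (so that the new nodes $c_i^\ast=1-c_{s+1-i}$ appear in the order dictated by \eqref{eq:RK3}) and matching with the Butcher form, a short algebraic manipulation yields
\begin{equation*}
a_{ij}^\ast = b_{s+1-j}-a_{s+1-i,\,s+1-j},\qquad b_j^\ast = b_{s+1-j},\qquad c_i^\ast = 1-c_{s+1-i}.
\end{equation*}
By construction, a Runge-Kutta method is symmetric precisely when its numerical flow is its own inverse under $h\mapsto -h$, equivalently when $(\mathbf{A},\mathbf{b})=(\mathbf{A}^\ast,\mathbf{b}^\ast)$. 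The two relations assumed in the theorem state exactly this equality of tableaux, so the first assertion follows immediately.

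For the second assertion I would argue by contradiction. Suppose an explicit Runge-Kutta method is symmetric. Explicitness forces $a_{ij}=0$ whenever $i\le j$; in particular every diagonal entry $a_{kk}$ vanishes. Setting $i=j$ in the symmetry relation then gives
\begin{equation*}
0 = a_{ii} = b_{s+1-i}-a_{s+1-i,\,s+1-i} = b_{s+1-i}
\end{equation*}
for every $i=1,\dots,s$, so that $b_k=0$ for all $k$. This contradicts the consistency condition $\sum_{i=1}^s b_i=1$, that is, the case $k=1$ of $B(p)$ in \eqref{eq:order1}. Hence no (consistent) explicit Runge-Kutta method can be symmetric.

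The main obstacle I anticipate is purely bookkeeping inside the first paragraph: deriving the adjoint tableau requires a careful relabelling of the internal stages and a consistent reading of the implicit stage equations after the reversal $t\mapsto t_{n+1}-t$, and one must verify that the reversed nodes $1-c_{s+1-i}$ genuinely correspond to the new stage ordering. Once the adjoint formula is in place, both assertions reduce to a single line of algebra.
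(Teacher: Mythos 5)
Your first assertion is handled correctly and by the standard route: identifying the displayed relations with equality of the tableau and its adjoint tableau is exactly the mechanism the paper itself sets up in Section~3.1, where $\mathbf{A}^*=\tilde P(\mathbf{e}\mathbf{b}^T-\mathbf{A})\tilde P^T$, $b_j^*=b_{s+1-j}$, $c_i^*=1-c_{s+1-i}$. (Note the paper states this theorem with citations to Stetter and Wanner and provides no proof of its own, so your sufficiency argument is the natural one to supply.)

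The second assertion, however, contains a genuine gap. You suppose an explicit method is symmetric and then set $i=j$ \emph{in the coefficient relations} to conclude $0=a_{ii}=b_{s+1-i}$. But your own first paragraph establishes those relations only as a \emph{sufficient} condition for symmetry: symmetry of the numerical flow does not force the given tableau to satisfy them. The converse is a nontrivial theorem (essentially due to Stetter) which holds only for \emph{irreducible} methods and only \emph{up to a permutation of the stages}; you invoke it implicitly without proof and without these hypotheses. Consequently your contradiction shows only that no explicit tableau satisfies the displayed relations, not that no explicit method is symmetric --- a strictly weaker statement. (Even granting a permutation $\pi$, your diagonal computation would go through, since $a_{\pi(i)\pi(i)}=0$ as well; the missing ingredient is precisely the necessity-up-to-permutation result, which needs irreducibility.) The clean repair, and the one matching the paper's Definition~\ref{def:symmetric} phrased through the stability function, is: a symmetric method satisfies $R(z)R(-z)=1$ identically, since the adjoint's stability function is $R^*(z)=R(-z)^{-1}$. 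For an explicit method $\mathbf{A}$ is strictly lower triangular, so $\det(I-z\mathbf{A})=1$ in \eqref{eq:sf2} and $R$ is a polynomial; consistency gives $R(z)=1+z+\cdots$, so $R$ is nonconstant and $R(z)R(-z)$ is a polynomial of degree at least $2$, which cannot be identically $1$. This one-line argument closes the gap without appealing to any converse of the tableau characterization.
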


\begin{prop}\label{prop:2}
Consider an $s$-stage symmetric Runge-Kutta method $(\mathbf{A, b, c})$. Suppose the order condition $B(p), C(\eta)$ and $D(\zeta)$ are fulfilled for this method and $\eta, \zeta<p$. Then we have that $C(q)_i=0$ implies $C(q)_{s+1-i}=0$ and vice versa, for $i=1,2,\ldots, s$ and $q=1,2,\ldots, \eta$. Similarly, $D(q)_i=0$ implies $D(q)_{s+1-i}=0$ and vice versa, for $i=1,2,\ldots, s$ and $q=1,2,\ldots,\zeta$.
\end{prop}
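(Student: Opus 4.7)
The plan is to derive two explicit algebraic identities,
\[
C(q)_{s+1-i}=\sum_{m=1}^{q}\binom{q-1}{m-1}(-1)^{m}\,C(m)_i,\qquad D(r)_{s+1-j}=\sum_{m=1}^{r}\binom{r-1}{m-1}(-1)^{m}\,D(m)_j,
\]
valid under the hypotheses of the proposition, and then deduce the biconditionals by induction on $q$ and $r$. As a preliminary, I would record that the symmetry relation of Theorem~\ref{thm:symmetric1} is equivalent to $a_{ij}+a_{s+1-i,s+1-j}=b_j$ (using $b_{s+1-j}=b_j$); summing this in $j$ and combining the consistency relation $c_i=\sum_j a_{ij}$ with $\sum_j b_j=1$ (part of $B(p)$) yields $c_i+c_{s+1-i}=1$. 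In particular $c_{s+1-i}^{q}=(1-c_i)^{q}$ and $c_{s+1-j}^{q-1}=(1-c_j)^{q-1}$.

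To obtain the first identity, I would start from the definition of $C(q)_{s+1-i}$ in \eqref{eq:n1}, reindex $j\mapsto s+1-j$, and substitute $a_{s+1-i,s+1-j}=b_j-a_{ij}$. After expanding $(1-c_j)^{q-1}$ with the binomial theorem, three kinds of terms appear: $\sum_j b_j c_j^{k}$, which equals $1/(k+1)$ by $B(p)$ (valid because $k\le q-1<p$); $\sum_j a_{ij}c_j^{k}$, which by definition equals $C(k+1)_i+\frac{1}{k+1}c_i^{k+1}$; and the lone term $-\frac{1}{q}(1-c_i)^{q}$. The elementary identity $\frac{1}{k+1}\binom{q-1}{k}=\frac{1}{q}\binom{q}{k+1}$ lets all the $c_i$-polynomial contributions reassemble into $\frac{1}{q}(1-c_i)^{q}$, which exactly cancels the $-\frac{1}{q}(1-c_i)^{q}$ above, and what survives is the stated linear combination of $C(m)_i$'s. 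The $D$-identity is obtained by a parallel computation on $D(r)_{s+1-j}$, this time reindexing $i\mapsto s+1-i$ in the outer sum defining $D$ and using $B(p)$ to evaluate $\sum_i b_i c_i^{r-1}=1/r$.

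With these identities in hand the induction on $q$ is immediate. The base case $q=1$ reads $C(1)_{s+1-i}=-C(1)_i$, so the biconditional is clear. For the inductive step, the induction hypothesis gives $C(m)_i=0\Leftrightarrow C(m)_{s+1-i}=0$ for $m<q$, so after those lower-order terms drop out the identity collapses to $C(q)_{s+1-i}=(-1)^{q}C(q)_i$, yielding the biconditional at level $q$; the same logic handles $D$. The main obstacle is the bookkeeping in the middle step: several binomial expansions, substitutions from symmetry, and uses of $B(p)$ run simultaneously, and one has to verify that all ``spurious'' pure-polynomial contributions in $c_i$ (respectively $c_j$) cancel exactly, leaving the clean triangular combination. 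The $D$-identity is not a formal corollary of the $C$-identity and requires its own analogous verification, although the cancellation mechanism is identical.
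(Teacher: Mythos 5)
Your proposal is correct, and at its computational core it runs the same engine as the paper's proof: reflect indices $i\mapsto s+1-i$, $j\mapsto s+1-j$, substitute the symmetry relations $a_{s+1-i,s+1-j}=b_j-a_{ij}$, $b_{s+1-j}=b_j$, $c_{s+1-i}=1-c_i$, binomial-expand $(1-c_j)^{q-1}$, evaluate the moments $\sum_j b_jc_j^k$ by $B(p)$ (legitimate since $q\le\eta<p$ and $r\le\zeta<p$), and telescope via $\frac{1}{k+1}\binom{q-1}{k}=\frac{1}{q}\binom{q}{k+1}$. Where you differ is the packaging, and it buys something real. The paper assumes $C(q)_i=0$ at the outset and substitutes lower-order information midstream; in particular, in replacing $\sum_j a_{s+1-i,s+1-j}c_{s+1-j}^{l}$ by $c_{s+1-i}^{l+1}/(l+1)$ for $l\le q-2$ it tacitly invokes $C(l+1)$ \emph{at the reflected row} $s+1-i$ --- part of the very family of statements being proved --- which is only licensed by an induction on $q$ (equivalently, by the nested hypothesis that all conditions below level $q$ hold at every row); the paper leaves this implicit, and the $D$-computation has the same feature. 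Your unconditional identities $C(q)_{s+1-i}=\sum_{m=1}^{q}\binom{q-1}{m-1}(-1)^{m}C(m)_i$ and the $D$-analogue, which I have verified (e.g.\ $q=2$ gives $C(2)_{s+1-i}=C(2)_i-C(1)_i$, and the pure polynomial terms in $c_i$ do cancel exactly as you claim), isolate every use of symmetry and $B(p)$ in a lemma requiring no $C$ or $D$ hypotheses at all; the triangular structure then yields $C(q)_{s+1-i}=(-1)^{q}C(q)_i$ level by level, making both the equivalence and its sign explicit, and supplying precisely the induction the paper's argument needs.

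Two small points of hygiene. First, your inductive step is phrased loosely: the biconditional $C(m)_i=0\Leftrightarrow C(m)_{s+1-i}=0$ for $m<q$ does not by itself annihilate the lower-order terms in your identity; what annihilates them is $C(m)_i=0$ for $m<q$, which is supplied directly by the proposition's hypothesis that $C(\eta)$ holds (the same nested reading the paper relies on) --- state it that way and the step is airtight. Second, your derivation of $c_i+c_{s+1-i}=1$ uses the consistency relation $c_i=\sum_j a_{ij}$, i.e.\ $C(1)$ itself, which makes the $q=1$ base case formally circular; this is harmless here because node symmetry $c_i=1-c_{s+1-i}$ is part of the symmetric-method package in the paper (cf.\ Theorem~\ref{thm:sa1}(2)) and can simply be taken as given, as the paper does.
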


\begin{proof}
We first prove that $C(q)_i=0$ is equivalent to $C(q)_{s+1-i}=0$. To that end, we have by direct calculations,
\begin{equation}\label{eq:d1}
\begin{split}
&\sum_{j=1}^{s}a_{ij}c_j^{q-1}\\
=& \sum_{j=1}^s(b_{s+1-j}-a_{s+1-i,s+1-j})c_j^{q-1}=\frac{1}{q}-\sum_{j=1}^sa_{s+1-i,s+1-j}(1-c_{s+1-j})^{q-1}\\
=& \frac{1}{q}-\sum_{j=1}^sa_{s+1-i,s+1-j}\sum_{l=0}^{q-1}(-1)^l\dbinom{l}{q-1}c_{s+1-j}^l\\
=& \frac{1}{q}-(-1)^{q-1}\sum_{j=1}^sa_{s+1-i,s+1-j}c_{s+1-j}^{q-1}-\sum_{j=1}^sa_{s+1-i,s+1-j}\sum_{l=0}^{q-2}(-1)^{l}\dbinom{l}{q-1}c_{s+1-j}^l,
\end{split}
\end{equation}
and
\begin{equation}\label{eq:d2}
\frac{1}{q}c_i^q=\frac{1}{q}(1-c_{s+1-i})^q=\frac{1}{q}\sum_{l=0}^q(-1)^l\dbinom{l}{q}c_{s+1-i}^l
=\frac{1}{q}\left(1+\sum_{l=1}^q(-1)^lc_{s+1-i}^l\dbinom{l}{q}\right).
\end{equation}
Using \eqref{eq:d1}, \eqref{eq:d2} and $C(q)_i=0$, one can further deduce that
\begin{equation*}
\begin{aligned}
&(-1)^q\sum_{j=1}^sa_{s+1-i,s+1-j}c_{s+1-j}^{q-1}\\
&=\frac{1}{q}\sum_{l=1}^q(-1)^l\dbinom{l}{q}c_{s+1-i}^l+\sum_{j=1}^sa_{s+1-i,s+1-j}\sum_{l=0}^{q-2}(-1)^l\dbinom{l}{q-1}c_{s+1-j}^l\\
&=\frac{1}{q}\sum_{l=1}^q(-1)^l\dbinom{l}{q}c_{s+1-i}^l+\sum_{l=0}^{q-2}(-1)^l\dbinom{l}{q-1}\frac{c_{s+1-i}^{l+1}}{l+1}\\
&=\frac{1}{q}(-1)^qc_{s+1-i}^{q}+\frac{1}{q}\sum_{l=1}^{q-1}(-1)^l\dbinom{l}{q}c_{s+1-i}^{l}+\sum_{l=0}^{q-2}(-1)^l\frac{1}{l+1}\dbinom{l}{q-1}c_{s+1-i}^{l+1}\\
&=\frac{1}{q}(-1)^qc_{s+1-i}^{q}+\frac{1}{q}\sum_{l=1}^{q-1}(-1)^l\dbinom{l}{q}c_{s+1-i}^{l}+\frac{1}{q}\sum_{l=0}^{q-2}(-1)^l\dbinom{l+1}{q}c_{s+1-i}^{l+1}\\
&=\frac{1}{q}(-1)^qc_{s+1-i}^{q}+\frac{1}{q}\sum_{l=0}^{q-2}(-1)^{l+1}\dbinom{l+1}{q}c_{s+1-i}^{l+1}+\frac{1}{q}\sum_{l=0}^{q-2}(-1)^l\dbinom{l+1}{q}c_{s+1-i}^{l+1}\\
&=\frac{1}{q}(-1)^qc_{s+1-i}^{q},
\end{aligned}
\end{equation*}
which readily gives that
\begin{equation*}
\sum_{j=1}^sa_{s+1-i,s+1-j}c_{s+1-j}^{q-1}=\frac{1}{q}c_{s+1-i}^{q}\Rightarrow\sum_{j=1}^sa_{s+1-i,j}c_j^{q-1}=\frac{1}{q}c_{s+1-i}^{q}.
\end{equation*}

Next, we prove that $D(q)_i=0$ is equivalent to $D(q)_{s+1-i}=0$. First, we have by direct calculations that
\begin{equation}\label{eq:dd1}
\begin{aligned}
\sum_{j=1}^sb_jc_j^{q-1}a_{ji}
&=\sum_{j=1}^sb_jc_j^{q-1}(b_{s+1-i}-a_{s+1-j,s+1-i})\\
&=\frac{1}{q}b_{s+1-i}-\sum_{j=1}^sb_{s+1-j}(1-c_{s+1-j})^{q-1}a_{s+1-j,s+1-i}\\
&=\frac{1}{q}b_{s+1-i}-\sum_{j=1}^sb_{s+1-j}a_{s+1-j,s+1-i}\sum_{k=0}^{q-1}(-1)^k\dbinom{k}{q-1}c_{s+1-j}^k\\
&=\frac{1}{q}b_{s+1-i}-(-1)^{q-1}\sum_{j=1}^sb_{s+1-j}a_{s+1-j,s+1-i}c_{s+1-j}^{q-1}\\
&-\sum_{j=1}^sb_{s+1-j}a_{s+1-j,s+1-i}\sum_{k=0}^{q-2}(-1)^k\dbinom{k}{q-1}c_{s+1-j}^k\\
\end{aligned}
\end{equation}
and
\begin{equation}\label{eq:dd2}
b_i(1-c_i^q)\frac{1}{q}=\frac{1}{q}b_{s+1-i}(1-(1-c_{s+1-i})^q)=\frac{1}{q}b_{s+1-i}-\frac{1}{q}b_{s+1-i}\sum_{k=0}^q(-1)^k\dbinom{k}{q}c_{s+1-i}^k.
\end{equation}
Using \eqref{eq:dd1}, \eqref{eq:dd2} and $D(q)_i=0$, one can further deduce that
\begin{equation*}
\begin{aligned}
&(-1)^q\sum_{j=1}^sb_{s+1-j}a_{s+1-j,s+1-i}c_{s+1-j}^{q-1}\\
&=\sum_{j=1}^sb_{s+1-j}a_{s+1-j,s+1-i}\sum_{k=0}^{q-2}(-1)^k\dbinom{k}{q-1}c_{s+1-j}^k-\frac{1}{q}b_{s+1-i}\sum_{k=0}^q(-1)^k\dbinom{k}{q}c_{s+1-i}^k\\
&=\sum_{k=0}^{q-2}(-1)^k\dbinom{k}{q-1}b_{s+1-i}\frac{(1-c_{s+1-i}^{k+1})}{k+1}-\frac{1}{q}(-1)^qb_{s+1-i}c_{s+1-i}^{q}\\
&\ \ -\frac{1}{q}b_{s+1-i}\sum_{k=1}^{q-1}(-1)^k\dbinom{k}{q}c_{s+1-i}^k-\frac{1}{q}b_{s+1-i}\\
&=\frac{1}{q}b_{s+1-i}\sum_{k=0}^{q-2}(-1)^k\dbinom{k+1}{q}(1-c_{s+1-i}^{k+1})\\
  &\quad-\frac{1}{q}(-1)^qb_{s+1-i}c_{s+1-i}^{q}-\frac{1}{q}b_{s+1-i}\sum_{k=0}^{q-2}(-1)^{k+1}\dbinom{k+1}{q}c_{s+1-i}^{k+1}-\frac{1}{q}b_{s+1-i}\\
&=\frac{1}{q}b_{s+1-i}\sum_{k=0}^{q-2}(-1)^k\dbinom{k+1}{q}-\frac{1}{q}(-1)^qb_{s+1-i}c_{s+1-i}^{q}-\frac{1}{q}b_{s+1-i}\\
&=\frac{1}{q}\left\{\sum_{k=0}^{q-2}(-1)^k\dbinom{k+1}{q}-(-1)^q\dbinom{q}{q}-1\right\}b_{s+1-i}\\
&\ \ +\frac{1}{q}(-1)^qb_{s+1-i}-\frac{1}{q}(-1)^qb_{s+1-i}c_{s+1-i}^{q}\\
&=-\frac{1}{q}b_{s+1-i}(1-1)^q+\frac{1}{q}(-1)^q b_{s+1-i}-\frac{1}{q}(-1)^qb_{s+1-i}c_{s+1-i}^{q}\\
&=\frac{1}{q}(-1)^qb_{s+1-i}(1-c_{s+1-i}^q)
\Rightarrow \sum_{j=1}^sb_{s+1-j}c_{s+1-j}^{q-1}a_{s+1-j,s+1-i}=\frac{1}{q}b_{s+1-i}(1-c_{s+1-i}^q).
\end{aligned}
\end{equation*}

The proof is complete.
\end{proof}

Next, we consider the symplectic Runge-Kutta method for Hamiltonian systems. Let $p(t)\in\mathbb{R}^N$ and $q(t)\in\mathbb{R}^N$, respectively, denote that generalised momentum and position coordinates, where $t$ is the temporal variable and $N$ is the dimension. Let $H(p, q)$ be a scalar function, signifying the Hamiltonian. The Hamiltonian system is the following evaluation equation,
\begin{equation}\label{eq:Ham1}
p'=-\nabla_q H(p, q),\quad q'=\nabla_p H(p, q).
\end{equation}
The time evolution of Hamilton's equations is a symplectomorphism, meaning that it conserves the symplectic two-form ${\displaystyle dp\wedge dq}$. A numerical scheme is a symplectic integrator if it also conserves this two-form.

\begin{thm}[\cite{Lasagni88,Sanz88, Suris89}]\label{thm:symp1}
An $s$-stage Runge-Kutta method $(\mathbf{A, b, c})$ is symplectic if its coefficients satisfy
\begin{equation}\label{eq:ss0}
b_ia_{ij}+b_ja_{ji}-b_ib_j=0, \quad i,j=1,...,s,
\end{equation}
namely, the matrix $M$ defined in \eqref{eq:M} is zero. Moreover, explicit Runge-Kutta methods are not explicit.
\end{thm}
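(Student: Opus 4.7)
The plan is to prove symplecticity via the variational equation of the Hamiltonian flow. Writing $z=(p,q)^T \in \R^{2N}$, one rewrites \eqref{eq:Ham1} as $z'=J\nabla H(z)$ where $J=\bigl(\begin{smallmatrix}0&-I\\I&0\end{smallmatrix}\bigr)$ satisfies $J^T=-J$, $J^2=-I$ and hence $J^TJ=I$. The Jacobian $\Psi_n=\partial z_n/\partial z_0$ of the numerical flow map satisfies the ``variational'' Runge–Kutta scheme obtained by linearizing \eqref{eq:RK1}--\eqref{eq:RK2} around the computed trajectory, namely
\[
\Pi_i=\Psi_n+h\sum_{j=1}^s a_{ij}K_j,\qquad K_i=JH''(Y_i)\,\Pi_i,\qquad \Psi_{n+1}=\Psi_n+h\sum_{i=1}^sb_iK_i,
\]
where $H''$ denotes the Hessian of $H$. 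The method is symplectic precisely when $\Psi_{n+1}^T J\Psi_{n+1}=\Psi_n^T J\Psi_n$ for every Hamiltonian $H$ and every step, so the goal is to show that \eqref{eq:ss0} forces this identity.

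First I would expand
\[
\Psi_{n+1}^T J\Psi_{n+1}-\Psi_n^T J\Psi_n
= h\sum_{i=1}^s b_i\bigl(K_i^T J\Psi_n+\Psi_n^T JK_i\bigr)+h^2\sum_{i,j=1}^s b_ib_j\,K_i^T JK_j.
\]
Using $\Psi_n=\Pi_i-h\sum_j a_{ij}K_j$, I would rewrite each term $K_i^T J\Psi_n$ and $\Psi_n^T JK_i$ in terms of $\Pi_i$ and the $K_j$'s. The central algebraic fact is that
\[
K_i^T J\Pi_i+\Pi_i^T JK_i=\Pi_i^T H''(Y_i)J^TJ\,\Pi_i+\Pi_i^T J^2 H''(Y_i)\,\Pi_i=\Pi_i^T H''(Y_i)\Pi_i-\Pi_i^T H''(Y_i)\Pi_i=0,
\]
so the linear-in-$h$ contribution collapses, leaving only $-h^2\sum_{i,j}(b_ia_{ij}+b_ja_{ji})K_i^T JK_j$ after a relabeling of indices. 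Combining with the bilinear term yields the clean identity
\[
\Psi_{n+1}^T J\Psi_{n+1}-\Psi_n^T J\Psi_n=-h^2\sum_{i,j=1}^s\bigl(b_ia_{ij}+b_ja_{ji}-b_ib_j\bigr)K_i^T JK_j=-h^2\sum_{i,j=1}^s m_{ij}K_i^T JK_j,
\]
which vanishes identically whenever \eqref{eq:ss0} holds, establishing symplecticity.

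For the final assertion (which is clearly meant to read ``explicit Runge–Kutta methods are not symplectic''), I would test the method on the harmonic oscillator $H(p,q)=\tfrac12(p^2+q^2)$. Since the exact flow is a rotation and the numerical flow becomes $z_{n+1}=R(h\Lambda)z_n$ with $\Lambda=\bigl(\begin{smallmatrix}0&-1\\1&0\end{smallmatrix}\bigr)$ whose eigenvalues are $\pm i$, symplecticity (area preservation in $\R^2$) requires $\det R(h\Lambda)=R(ih)R(-ih)=|R(ih)|^2=1$ for every $h\in\R_+$. For an explicit Runge–Kutta method, $\mathbf{A}$ is strictly lower-triangular, so $\det(I-z\mathbf{A})=1$ in \eqref{eq:sf2} and $R(z)$ is a polynomial in $z$; but a nonconstant real polynomial cannot satisfy $|R(ih)|^2\equiv 1$, and $R$ is forced to be consistent ($R(z)=1+z+O(z^2)$), a contradiction.

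The main technical obstacle is the first step: choosing sign conventions for $J$ compatible with both Hamilton's equations and the definition of $dp\wedge dq$, and then verifying the cancellation $K_i^T J\Pi_i+\Pi_i^T JK_i=0$, which is where the Hamiltonian structure (symmetry of $H''$ combined with $J^T=-J$, $J^2=-I$) enters in an essential way. Everything else is bookkeeping of the quadratic form in the $K_i$'s to read off the matrix $M$ of \eqref{eq:M}.
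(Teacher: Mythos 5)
The paper states Theorem~\ref{thm:symp1} without proof, quoting it from \cite{Lasagni88,Sanz88,Suris89}, so there is no internal argument to compare yours against; your proposal supplies a complete and correct proof. The route you take is the standard variational-equation argument (essentially the one in Hairer--Lubich--Wanner's book, rather than Lasagni's generating-function proof or the direct computation of $dp_{n+1}\wedge dq_{n+1}$ in stage differentials used by Sanz-Serna and Suris): the Jacobians $\Pi_i,\Psi_n$ satisfy the same Runge--Kutta scheme applied to the linearized flow, the terms linear in $h$ cancel because
\[
K_i^TJ\Pi_i+\Pi_i^TJK_i=\Pi_i^T\bigl(H''J^TJ+J^2H''\bigr)\Pi_i=0
\]
by symmetry of $H''$ together with $J^T=-J$, $J^2=-I$, and after relabeling the quadratic terms one is left with $\Psi_{n+1}^TJ\Psi_{n+1}-\Psi_n^TJ\Psi_n=-h^2\sum_{i,j=1}^s m_{ij}\,K_i^TJK_j$, which vanishes under \eqref{eq:ss0}; I checked the bookkeeping and the signs are right. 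You also correctly read the paper's typo ``explicit Runge-Kutta methods are not explicit'' as ``not symplectic'', and your harmonic-oscillator argument is sound: for a strictly lower-triangular $\mathbf{A}$ one has $\det(I-z\mathbf{A})=1$, so $R$ in \eqref{eq:sf2} is a polynomial, $\det R(h\Lambda)=R(ih)R(-ih)$, the identity $R(ih)R(-ih)=1$ for all $h\in\R_+$ extends to the polynomial identity $R(z)R(-z)\equiv1$, and comparing leading coefficients forces $\deg R=0$. The only loose phrase is that $R$ ``is forced to be consistent'': consistency is an assumption, not a consequence --- a degenerate tableau with $\mathbf{b}=0$ has $R\equiv1$ and trivially preserves area on the test problem --- so you should instead invoke that the theorem concerns genuine methods of order $p\geq1$ (the condition $B(1)$, i.e.\ $\sum_i b_i=1$, is standing throughout the paper), whence $R(z)=1+z+O(z^2)$ is nonconstant and the contradiction follows; similarly, a final write-up should record that for implicit stages the maps $z_n\mapsto\Pi_i$ exist and are differentiable for small $h$ by the implicit function theorem. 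These are presentational touch-ups, not gaps.
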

For a symplectic Runge-Kutta method, we have
\begin{thm}\label{thm:symp2}
Let $(\mathbf{A, b, c})$ be an $s$-stage symplectic Runge-Kutta method. If $B(p), C(\eta)$ or $B(p), D(\eta)$ are satisfied with $p\leq 2\eta+1$, then the method is of order $p$.
\end{thm}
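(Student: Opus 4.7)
The plan is to leverage Theorem~\ref{thm:1} by showing that, for a symplectic Runge-Kutta method, the simplified condition $C(\eta)$ together with $B(\eta)$ automatically forces $D(\eta)$, and symmetrically $D(\eta)$ together with $B(\eta)$ forces $C(\eta)$. Since $B(p)$ is assumed and implies $B(q)$ for every $q\le p$, this will allow us to invoke Theorem~\ref{thm:1} with $\zeta=\eta$, giving the stated order conclusion once we check that $p\le \eta+\zeta+1=2\eta+1$ (given) and $p\le 2\eta+2$ (automatic).

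The key algebraic input is the symplecticity relation \eqref{eq:ss0}, rewritten as
\begin{equation*}
b_i a_{ij} = b_i b_j - b_j a_{ji},\qquad i,j=1,\ldots,s.
\end{equation*}
For the first assertion, I would substitute this into the sum $\sum_i b_i c_i^{q-1} a_{ij}$ appearing in $D(q)_j$ from \eqref{eq:order3}, obtaining $b_j \sum_i b_i c_i^{q-1} - b_j \sum_i a_{ji} c_i^{q-1}$. Applying $B(q)$ to the first sum and $C(q)_j$ to the second sum (both available for $q\le\eta$) yields exactly $\tfrac{1}{q} b_j(1-c_j^q)$, so $D(q)_j = 0$ for every $j$ and every $q\le \eta$. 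Hence $D(\eta)$ holds and Theorem~\ref{thm:1} with $\zeta=\eta$ completes the first half.

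For the second half I would run the argument in reverse. Multiplying $C(q)_i$ by $b_i$ and inserting the symplecticity identity turns it into $b_i\sum_j b_j c_j^{q-1} - \sum_j b_j a_{ji} c_j^{q-1} - \tfrac{b_i}{q} c_i^q$, and $B(q)$ together with $D(q)_i$ forces this quantity to vanish, so that $b_i C(q)_i = 0$ for all $i$ and all $q\le\eta$.

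The one delicate point, which I expect to be the main obstacle, is that the second calculation produces $b_i C(q)_i = 0$ rather than $C(q)_i = 0$ pointwise, so stages with $b_i=0$ require separate treatment. However, the symplecticity relation itself resolves this: if $b_i=0$ then $b_j a_{ji}=0$ for every $j$, which shows that stage $i$ is fully decoupled from the effective method and can be deleted without altering the numerical flow. One may therefore reduce to the case $b_i\ne 0$ for every $i$, upgrade $b_i C(q)_i=0$ to $C(q)_i=0$, and apply Theorem~\ref{thm:1} with $\zeta=\eta$ to finish the proof.
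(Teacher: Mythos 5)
Your proof is correct and follows essentially the same route as the paper: substitute the symplecticity relation $b_ia_{ij}+b_ja_{ji}=b_ib_j$ into the $D(q)$ (resp.\ $C(q)$) sums, use $B(q)$ to show that $C(\eta)$ and $D(\eta)$ are equivalent for a symplectic method, and invoke Theorem~\ref{thm:1} with $\zeta=\eta$, so that $p\le\eta+\zeta+1=2\eta+1$ is exactly the hypothesis and $p\le 2\eta+2$ is automatic. If anything you are more careful than the paper, whose displayed computation silently divides by $b_i$; your observation that $b_i=0$ forces $b_ja_{ji}=0$ for all $j$, so that zero-weight stages decouple and can be deleted without changing the numerical flow, legitimately patches that implicit assumption.
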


\begin{proof}
By using the $W$-transform \cite{Hairer81}, we know that the standard matrix $X_H$ of $\mathbf{A}$ is skew-symmetric, except the first entry $x_1=1/2$, and hence for the order conditions $C(\eta)$ and $D(\zeta)$, one always has $\eta=\zeta$. Next, we show that $C(\eta)$ implies $D(\eta)$ and vice versa. To that end, we note that $D(q)_i$ can be written as
\begin{equation}\label{eq:ss1}
\frac{1}{q}b_i-\sum_{j=1}^{s}b_jc_j^{q-1}a_{ji}=\frac{1}{q}b_ic_i^q.
\end{equation}
By virtue of the symplectic condition \eqref{eq:ss0}, one then has from \eqref{eq:ss1} that
$$\frac{1}{q}b_i-\sum_{j=1}^{s}c_j^{q-1}(b_ib_j-b_ia_{ij})=\frac{1}{q}b_ic_i^q,$$
which readily implies that
$$\sum_{j=1}^{s}c_j^{q-1}a_{ij}=\frac{1}{q}c_i^{q},$$
namely, $C(q)_i=0$. This completes the proof.
\end{proof}

\begin{prop}[\cite{Feng84}]\label{prop:ss1}
For a linear Hamiltonian system, the symmetry and symplecticity of a Runge-Kutta method are equivalent, and its stability function is of the form
\begin{equation}\label{eq:ss8}
R(z)=P(z)/P(-z), \quad z\in\mathbb{C}.
\end{equation}
\end{prop}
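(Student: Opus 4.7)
The plan is to reduce both the symmetric condition and the symplectic condition, when restricted to linear Hamiltonian systems, to a single scalar identity on the stability function $R(z)$ in \eqref{eq:sf2}, and then deduce the claimed rational form by a polynomial factorization argument.

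A linear Hamiltonian system has the form $y'=Ay$ with $A=JS$, $S$ symmetric, so that the Hamiltonian identity $A^T J+JA=0$ gives $A^T=-JAJ^{-1}$. An $s$-stage Runge-Kutta method applied to this system produces the one-step linear map $y_{n+1}=R(hA)y_n$. Since polynomial (and hence rational) operations commute with conjugation and transposition, one has $R(hA)^T=R(hA^T)=JR(-hA)J^{-1}$. The symplectic requirement $R(hA)^T J R(hA)=J$ therefore collapses to $R(-hA)R(hA)=I$; letting $A$ range over Hamiltonian matrices (so that its eigenvalues $\lambda$ sweep out $\mathbb{C}$) and setting $z=h\lambda$, this becomes the scalar identity $R(z)R(-z)=1$. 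On the other hand, the symmetry condition, interpreted via the natural adjoint/time-reversal relation $\Phi_h^{-1}=\Phi_{-h}$ applied to the scalar test equation \eqref{eq:test1}, reads $R(z)^{-1}=R(-z)$ — the same identity. This proves the equivalence of the two notions on linear Hamiltonian systems.

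For the rational form, write $R(z)=N(z)/D(z)$ with $N(z)=\det(I+z(\mathbf{e}\mathbf{b}^T-\mathbf{A}))$ and $D(z)=\det(I-z\mathbf{A})$, and, without loss of generality, cancel any common factors so that $N$ and $D$ are coprime. The identity $R(z)R(-z)=1$ becomes the polynomial identity $N(z)N(-z)=D(z)D(-z)$. Factoring both sides as even polynomials in $z$ (equivalently, as polynomials in $z^2$) and using coprimality, for every root $\alpha$ of $N$ the negative $-\alpha$ must be a root of $D$; comparison of leading coefficients then forces $D(z)=cN(-z)$ for a scalar $c\in\{\pm 1\}$ (after absorbing the $(-1)^s$ from $z\mapsto -z$ on a degree-$s$ polynomial), and the consistency normalization $R(0)=1$ fixes $c=1$. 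Setting $P:=N$ yields the announced form $R(z)=P(z)/P(-z)$.

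The main obstacle I expect is the algebraic book-keeping in the last step: tracking the sign $(-1)^s$ introduced by the substitution $z\mapsto -z$, and verifying that the coprimality reduction and the consistency normalization cooperate to select the correct scalar $c$. The auxiliary identity $R(hA^T)=JR(-hA)J^{-1}$ is routine but deserves a careful derivation from the determinantal representation \eqref{eq:sf2} to avoid any circularity with invertibility of $I-h\mathbf{A}\otimes A$.
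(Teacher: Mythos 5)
Your proof is correct; note, however, that the paper offers no proof of this proposition at all --- it is stated with a citation to \cite{Feng84} --- so there is no internal argument to compare against, and your write-up stands as a self-contained derivation along the classical lines. Both halves hold up. (i) For $y'=Ay$ with $A^TJ+JA=0$, the conjugation identity $R(hA)^T=R(hA^T)=JR(-hA)J^{-1}$ does collapse the symplecticity condition $R(hA)^TJR(hA)=J$ to $R(-hA)R(hA)=I$, and symmetry of the one-step map, $\Phi_{-h}=\Phi_h^{-1}$, reads $R(-hA)=R(hA)^{-1}$ --- the same condition --- so the equivalence is even pointwise in $A$; passing to eigenvalues then gives the scalar identity $R(z)R(-z)=1$, and you do not even need the eigenvalues to sweep all of $\mathbb{C}$: since $R(z)R(-z)-1$ is rational, vanishing on any set with an accumulation point (e.g.\ the imaginary axis, from harmonic oscillators) already forces it to vanish identically. (ii) In the factorization step, the cleanest phrasing of what you sketch is Euclid's lemma for polynomials: from $N(z)N(-z)=D(z)D(-z)$ with $\gcd(N,D)=1$ one gets $N(z)\mid D(-z)$ and $D(z)\mid N(-z)$, hence $\deg N=\deg D$ and $D(z)=cN(-z)$; substituting back gives $c^2=1$, and $R(0)=1$ (immediate from \eqref{eq:sf2}) selects $c=1$, with $N(0)\neq 0$ guaranteed because $N(0)=0$ would force $D(0)=cN(0)=0$, contradicting coprimality. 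Your anticipated obstacle about the $(-1)^s$ sign is thus moot: pinning $c$ via $c^2=1$ plus the normalization sidesteps leading-coefficient bookkeeping entirely.

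Two cosmetic remarks. First, your phrase ``factoring both sides as even polynomials'' is looser than needed and slightly obscures multiplicities; the divisibility argument above is the rigorous version and costs nothing extra. Second, the scalar test equation \eqref{eq:test1} is not itself a Hamiltonian system, so strictly speaking the symmetry direction should be run through the matrix identity $R(-hA)=R(hA)^{-1}$ exactly as you did for symplecticity (or through the adjoint characterization $R^*(z)=R(-z)^{-1}$ of the stability function, which is the paper's own framing in Definition~\ref{def:symmetric}); this changes nothing of substance in your argument.
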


For a Hamiltonian system \eqref{eq:Ham1}, one can apply a pair of Runge-Kutta methods, $(\mathbf{A, b, c})$ and ${(\bar{\mathbf{A}}, \bar{\mathbf{b}}, \bar{\mathbf{c}}})$, respectively to the first and second equations to obtain a numerical integrator. This is called a partitioned Runge-Kutta method.  We have

\begin{thm}[\cite{Suris90},\cite{Serna92},\cite{Sun932}]\label{thm:ss5}
Consider a partitioned Runge-Kutta method $(\mathbf{A, b, c})-{(\bar{\mathbf{A}}, \bar{\mathbf{b}}, \bar{\mathbf{c}}})$. If their coefficients satisfy
\begin{equation}\label{eq:sprk1}
\begin{cases}
b_i\bar{a}_{ij}+\bar{b}_ja_{ji}-b_i\bar{b}_j=0,\quad i,j=1,...,s,\medskip\\
b_i=\bar{b_i},\quad i=1,...,s,
\end{cases}
\end{equation}
then the method is symplectic.
\end{thm}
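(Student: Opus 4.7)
The plan is to follow the classical symplectic-form-tracking argument (as used for standard symplectic Runge-Kutta methods in Theorem 2.4) adapted to the partitioned setting. The goal is to show $dp_{n+1}\wedge dq_{n+1}=dp_n\wedge dq_n$, where wedge denotes the sum over vector components of the canonical two-form.

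First, I would write out the partitioned scheme explicitly. Introducing the internal stages $P_i,Q_i$ and the slopes $k_i:=-\nabla_q H(P_i,Q_i)$, $\ell_i:=\nabla_p H(P_i,Q_i)$, the method reads $P_i=p_n+h\sum_j a_{ij}k_j$, $Q_i=q_n+h\sum_j\bar a_{ij}\ell_j$, $p_{n+1}=p_n+h\sum_i b_i k_i$, $q_{n+1}=q_n+h\sum_i \bar b_i\ell_i$. Differentiating the update formulas (with respect to the initial data) and forming the wedge gives
\begin{equation*}
dp_{n+1}\wedge dq_{n+1}=dp_n\wedge dq_n+h\sum_i b_i\, dk_i\wedge dq_n+h\sum_i\bar b_i\, dp_n\wedge d\ell_i+h^2\!\!\sum_{i,j}b_i\bar b_j\, dk_i\wedge d\ell_j.
\end{equation*}

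Next, I would eliminate $dp_n$ and $dq_n$ from the mixed $h$-terms using the stage equations $dp_n=dP_i-h\sum_j a_{ij}dk_j$ and $dq_n=dQ_i-h\sum_j\bar a_{ij}d\ell_j$. Collecting, the expression above becomes
\begin{equation*}
dp_n\wedge dq_n+h\sum_i\bigl(b_i\, dk_i\wedge dQ_i+\bar b_i\, dP_i\wedge d\ell_i\bigr)+h^2\!\!\sum_{i,j}\bigl(b_i\bar b_j-b_i\bar a_{ij}-\bar b_j a_{ji}\bigr)dk_i\wedge d\ell_j.
\end{equation*}
The $h^2$-coefficient is exactly the quantity that the first hypothesis of the theorem forces to vanish, so all $h^2$ terms drop out immediately.

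It remains to handle the $h$-terms, and this is where the Hamiltonian structure and the condition $b_i=\bar b_i$ enter. Differentiating $k_i=-\nabla_q H(P_i,Q_i)$ and $\ell_i=\nabla_p H(P_i,Q_i)$ gives $dk_i=-H_{qp}dP_i-H_{qq}dQ_i$ and $d\ell_i=H_{pp}dP_i+H_{pq}dQ_i$ with the Hessian blocks evaluated at $(P_i,Q_i)$. Because $H_{pp}$ and $H_{qq}$ are symmetric, their contributions to $dP_i\wedge d\ell_i$ and $dk_i\wedge dQ_i$ respectively vanish by antisymmetry of the wedge. Using $H_{pq}=H_{qp}^{T}$, a short index computation yields $dk_i\wedge dQ_i=-dP_i\wedge d\ell_i$. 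Therefore, once $b_i=\bar b_i$ is imposed, the entire $h$-sum vanishes term by term, giving $dp_{n+1}\wedge dq_{n+1}=dp_n\wedge dq_n$, as required.

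The only delicate step is the bookkeeping in the substitution that converts $dp_n,dq_n$ into $dP_i,dQ_i$ (so that the resulting $h^2$-coefficient matches exactly the expression appearing in the first symplecticity condition); everything else is either the standard Hessian-symmetry cancellation or an immediate application of the two hypotheses. No further structural input is needed.
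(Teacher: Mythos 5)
Your proof is correct and complete: the expansion of $dp_{n+1}\wedge dq_{n+1}$, the substitution of the stage relations to produce the $h^2$-coefficient $b_i\bar b_j-b_i\bar a_{ij}-\bar b_j a_{ji}$ (which vanishes by the first hypothesis), and the cancellation of the $O(h)$ terms via symmetry of the Hessian blocks $H_{pp},H_{qq}$ together with $H_{pq}=H_{qp}^{T}$ and $b_i=\bar b_i$ are all sound. Note that the paper itself states this theorem without proof, citing Suris, Sanz-Serna and Sun; your wedge-product computation is exactly the classical argument found in those references, so there is nothing to reconcile.
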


It can be shown by straightforward calculations that for a linear Hamiltonian system, a partitioned Runge-Kutta method is symplectic if
\begin{equation}\label{eq:sss9}
R(z)\cdot \bar{R}(z)=P(z)/P(-z)\quad \mbox{and}\quad \bar{R}(z)=R(-z)^{-1},
\end{equation}
where $R(z)$ and $\bar{R}(z)$ are, respectively, the stability functions for $(\mathbf{A, b, c})$ and ${(\bar{\mathbf{A}}, \bar{\mathbf{b}}, \bar{\mathbf{c}}})$.
Henceforth, in order to simplify notations, we denote $\Phi:=(\mathbf{A, b, c})$ and $\bar{\Phi}:=(\bar{\mathbf{A}}, \bar{\mathbf{b}}, \bar{\mathbf{c}})$. We have

\begin{prop}\label{prop:ss9}
Consider a symplectic partitioned Runge-Kutta method $(\Phi, \bar{\Phi})$ and suppose that it is of order $p$. If $\Phi$ is of order $p$, then $\bar{\Phi}$ is at least of order $p$.
\end{prop}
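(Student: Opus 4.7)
The proposition asserts that whenever a symplectic partitioned pair $(\Phi,\bar\Phi)$ achieves global order $p$ and the first component $\Phi$ is individually of order $p$, the second component $\bar\Phi$ must also attain order at least $p$. My plan is to reduce $\bar\Phi$'s order conditions to those of $\Phi$ via the symplectic identity \eqref{eq:sprk1}. First, using $\bar b_i = b_i$ together with $b_i\bar a_{ij}+\bar b_j a_{ji}=b_i\bar b_j$, I would solve explicitly for $\bar{\mathbf A}$,
\[
\bar a_{ij} \;=\; \frac{b_j}{b_i}\bigl(b_i - a_{ji}\bigr)\qquad (b_i\neq 0),
\]
or equivalently $\bar{\mathbf A}=\mathbf e\mathbf b^T - B^{-1}\mathbf A^T B$ with $B=\mathrm{diag}(\mathbf b)$; the degenerate case $b_i=0$ can be disposed of by deleting the inactive stage, which does not alter the method. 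This writes every entry of $\bar\Phi$ as an explicit rational expression in the entries of $\Phi$.

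Next I would invoke Butcher's characterization of order: $\bar\Phi$ has order at least $p$ iff for every rooted tree $t$ with $|t|\le p$ the elementary weight $\bar\Phi(t)$ equals $1/\gamma(t)$. Substituting the formula above into the definition $\bar\Phi(t)=\sum b_{i_r}\prod_{(u,v)\in E(t)}\bar a_{i_u i_v}$ and expanding the product edge by edge, the weight $\bar\Phi(t)$ decomposes as a signed sum indexed by subsets $S\subseteq E(t)$. After collecting the $b_j/b_i$ factors, each summand is -- up to the sign $(-1)^{|S|}$ -- a $\Phi$-type elementary weight on a modified tree $t_S$ obtained from $t$ by reversing the orientation of the edges in $S$ (and possibly reattaching the root). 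The hypothesis that $\Phi$ is of order $p$ then replaces every such weight by $1/\gamma(t_S)$, and the whole question collapses to a purely combinatorial identity among tree symmetry factors.

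The main obstacle, and the combinatorial crux of the argument, is proving that this alternating sum of $1/\gamma(t_S)$ actually collapses to $1/\gamma(t)$. I would proceed by induction on the height of $t$: peel off a leaf $\ell$, apply the symplectic identity \eqref{eq:sprk1} at the single edge incident to $\ell$ as the local cancellation rule, and reduce the identity to its analogue on strictly smaller trees. The assumption that the partitioned pair $(\Phi,\bar\Phi)$ itself is of order $p$ is used precisely at this step: it furnishes the coupling conditions (arising from the bi-coloured trees) that annihilate the cross-terms produced when the peeled leaf's edge lies in $S$ but its parent's does not, so the induction closes without any \emph{a priori} conditions on $\bar\Phi$. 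A cleaner conceptual route, which I would pursue in parallel, is to work inside the bi-coloured partitioned B-series group: the symplectic identity realizes $\bar\Phi$ as a ``symplectic dual'' of $\Phi$, and duality in this group automatically preserves the order, bypassing the tree-by-tree bookkeeping.
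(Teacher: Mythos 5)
Your proposal misses the short argument and, taken on its own terms, has a circularity in how it uses the hypotheses. The paper's proof is two lines: let $q$ be the order of $\bar\Phi$; then the partitioned pair $(\Phi,\bar\Phi)$ has order at most $\min(p,q)$ --- for instance, applied to a decoupled partitioned system it reduces to $\Phi$ and $\bar\Phi$ run separately, or equivalently, the order conditions of $\bar\Phi$ are exactly the conditions of the pair on bi-coloured trees whose vertices are all of the second colour, hence a subset of the pair's conditions --- so the hypothesis that the pair has order $p$ forces $q\ge p$. The symplectic relation \eqref{eq:sprk1} plays no role whatsoever. Your plan inverts this logic: you try to derive $\bar\Phi$'s conditions from $\Phi$'s via \eqref{eq:sprk1}, and then invoke the pair's ``coupling conditions arising from the bi-coloured trees'' to annihilate cross-terms. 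But those coupling conditions \emph{contain verbatim} the monochromatic conditions on $\bar\Phi$ you are trying to establish, so the hypothesis you call on at the inductive step already implies the conclusion outright; the elaborate edge-expansion machinery is doing no work, and a completed version of your argument would be an unnecessarily circuitous repackaging of the assumption.

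There are also concrete technical failures in the sketch itself. First, the case $b_i=0$ cannot be ``disposed of by deleting the inactive stage'': a stage with zero weight still feeds into other stages through the column $(a_{ji})_j$, so deletion generally changes the method (note the paper imposes $b_i\neq0$ only in Definition~\ref{def:smpa}, while Proposition~\ref{prop:ss9} carries no such hypothesis --- another reason the direct proof is preferable). Second, after substituting $\bar a_{ij}=b_j(1-a_{ji}/b_i)$ no entries of $\bar{\mathbf A}$ remain in the expanded weight, so it is unclear how bi-coloured coupling conditions could even enter your leaf-peeling cancellation step. Third, the expansion over $S\subseteq E(t)$ does not produce trees with reversed edges: each $b_j$ factor severs an edge, yielding forests with relocated roots, and the asserted collapse of the signed sum of the $1/\gamma(t_S)$ to $1/\gamma(t)$ is precisely the nontrivial root-switching calculus behind canonical B-series (resting on the switching identity $\Phi(u\circ v)+\Phi(v\circ u)=\Phi(u)\Phi(v)$ for methods satisfying \eqref{eq:sprk1}); asserting it by induction on height leaves the combinatorial crux unproven. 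The parallel ``duality in the bi-coloured B-series group'' route is likewise a gesture rather than an argument.
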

\begin{proof}
Let $\bar{\Phi}$ be of order $q$. It can be readily seen that $(\Phi, \bar{\Phi})$ is of order $\min(p, q)$. Hence, $\bar{\Phi}$ is at least of order $p$.
\end{proof}

\section{Symmetric-adjoint and symplectic-adjoint methods}

\subsection{Symmetric-adjoint method}

Consider an $s$-stage Runge-Kutta method $(\mathbf{A, b, c})$ and its stability function given in \eqref{eq:sf2}. Set $\bar{\mathbf{A}}=\mathbf{e}\mathbf{b}^T-\mathbf{A}$,
\begin{equation}\label{eq:sa1}
\bar{\mathbf{A}}=(\mathbf{e}\mathbf{b}^T-\mathbf{A})=\left(
                 \begin{array}{cccc}
                   b_1 & b_2 & \cdots & b_s \\
                   \vdots  & \vdots & & \vdots \\
                   b_1 & b_2 & \cdots & b_s \\
                 \end{array}
               \right)-\left(
                         \begin{array}{cccc}
                           a_{11} &\cdots & a_{1s} \\
                           a_{21} & \cdots & a_{2s} \\
                           \vdots & & \vdots \\
                           a_{s1} & \cdots & a_{ss} \\
                         \end{array}
                       \right).
\end{equation}
One can see that there holds
\begin{equation}\label{eq:sa2}
\bar{a}_{ij}=b_j-a_{ij},\bar{c}_i=\sum_{j=1}^s\bar{a}_{ij}=\sum_{j=1}^s(b_j-a_{ij})=1-c_i,\quad \bar{b}_j=b_j,\quad i,j=1,\cdots,s.
\end{equation}
Therefore, one obtains a Runge-Kutta method $\bar{\Phi}$ that is of the same order of the original one $\Phi$. Suppose that the nodes of $\Phi$ satisfy $0\leq c_1\leq c_2\leq\cdots\leq c_s\leq 1$. By a permutation transform of the form,
$$
\tilde{P}\bar{\mathbf{A}}\tilde{P}^T=\tilde{P}(\mathbf{eb}^T-\mathbf{A})\tilde{P}^T:=\mathbf{A}^*,
$$
where
$$\tilde{P}=\left(
                 \begin{array}{ccccc}
                   0 & 0 & \cdots & 0 & 1 \\
                   \vdots  & & \ddots & \vdots \\
                   0 & 1 & \cdots & 0 & 0 \\
                   1 & 0 & \cdots & 0 & 0 \\
                 \end{array}
               \right),
$$
one can further obtain a Runge-Kutta method $\Phi^*=(\mathbf{A}^*, \mathbf{b}^*, \mathbf{c}^*)$ with
\begin{equation}
a_{ij}^*=b_{s+1-j}-a_{s+1-i,s+1-j},b_j^*=b_{s+1-j},c^*_i=1-c_{s+1-i},\quad i,j=1,\cdots,s.
\end{equation}
$\Phi^*$ is referred to as the {\it symmetric-adjoint method} of $\Phi$.

\begin{thm}[\cite{Hairer83,Hairer02}]\label{thm:sa1}
There hold the following results,
\begin{enumerate}
\item $(\Phi^*)^*=\Phi$;

\item If $\Phi^*=\Phi$, then $\Phi$ is a symmetric Runge-Kutta method, namely
$$
a_{ij}=b_{s+1-j}-a_{s+1-i,s+1-j},\quad b_j=b_{s+1-j},\quad c_i=1-c_{s+1-i},\quad i,j=1,\cdots,s.
$$

\item $\Phi$ and $\Phi^*$ possess the same order $p$, and $\Phi(h/2)\circ\Phi^*(h/2)$ is at least of order $p$.

\item $\Phi$ is symplectic if and only if $\Phi^*$ is symplectic. In other words, unless $\Phi^*$ is both symmetric and symplectic, one can always find a different symplectic method $\Phi^*$ from a given symplectic method $\Phi$.

\item The algebraic average of $\Phi$ and $\Phi^*$, namely $(\Phi+\Phi^*)/2$ is a symmetric Runge-Kutta method. In particular, if the quadrature nodes and weights, $\mathbf{c}$ and $\mathbf{b}$, are symmetric, then $\Phi^*=((\mathbf{A}+\mathbf{A}^*)/2, \mathbf{b}, \mathbf{c})$ possesses the same order of $\Phi$ (cf. \cite{Gan13}).

\end{enumerate}
\end{thm}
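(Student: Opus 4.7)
The plan is to verify each of the five statements in order, using only the explicit formulas
\[a^*_{ij}=b_{s+1-j}-a_{s+1-i,s+1-j},\quad b^*_j=b_{s+1-j},\quad c^*_i=1-c_{s+1-i}\]
together with the observation made just before the theorem that $\Phi^*$ is a stage-permutation of the adjoint tableau $\bar{\Phi}=(\mathbf{e}\mathbf{b}^T-\mathbf{A},\mathbf{b},\mathbf{e}-\mathbf{c})$, whose step map equals $\Phi(-h)^{-1}$. Parts (1) and (2) are then bookkeeping: substituting the starred formulas into themselves and collapsing $s+1-(s+1-i)=i$ gives $(a^*)^*_{ij}=a_{ij}$, $(b^*)^*_j=b_j$, $(c^*)^*_i=c_i$, while $\Phi^*=\Phi$ is literally the system of symmetry relations of Theorem~\ref{thm:symmetric1}.

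For (3), the identification $\Phi^*(h)=\Phi(-h)^{-1}$ (up to the stage relabelling, which does not affect the flow) immediately gives that $\Phi(h)=\varphi_h+O(h^{p+1})$ forces $\Phi^*(h)=\varphi_h+O(h^{p+1})$, because $\varphi_{-h}^{-1}=\varphi_h$. The composition $\Phi(h/2)\circ\Phi^*(h/2)$ therefore has local error $O(h^{p+1})$, which I would make rigorous through a standard B-series composition argument.

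Part (4) reduces to a one-line calculation: substituting the starred formulas into $b^*_ia^*_{ij}+b^*_ja^*_{ji}-b^*_ib^*_j$ and writing $i'=s+1-i$, $j'=s+1-j$, the expression collapses to $-(b_{i'}a_{i'j'}+b_{j'}a_{j'i'}-b_{i'}b_{j'})$, so the symplecticity condition \eqref{eq:ss0} holds for $\Phi^*$ if and only if it holds for $\Phi$; the converse then uses (1). For (5), put $\tilde{\Phi}=((\mathbf{A}+\mathbf{A}^*)/2,(\mathbf{b}+\mathbf{b}^*)/2,(\mathbf{c}+\mathbf{c}^*)/2)$ and verify the three symmetry relations of Theorem~\ref{thm:symmetric1} entry-by-entry using $a^*_{ij}+a_{s+1-i,s+1-j}=b_{s+1-j}$ and the obvious $\mathbf{b}$- and $\mathbf{c}$-analogues. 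When $\mathbf{b}$ and $\mathbf{c}$ are already symmetric, $\mathbf{b}^*=\mathbf{b}$ and $\mathbf{c}^*=\mathbf{c}$, so $\tilde{\Phi}=((\mathbf{A}+\mathbf{A}^*)/2,\mathbf{b},\mathbf{c})$; to conclude the order is preserved, I would first show that $\Phi^*$ inherits the simplified conditions $B(p)$, $C(\eta)$, $D(\zeta)$ of Theorem~\ref{thm:1} from $\Phi$ (using binomial identities of the type seen in the proof of Proposition~\ref{prop:2}, combined with the symmetry of $\mathbf{b},\mathbf{c}$), and then observe that these simplified conditions are \emph{linear} in $\mathbf{A}$ once $\mathbf{b},\mathbf{c}$ are held fixed, so they pass to convex combinations.

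The main obstacle I expect lies in this last order-preservation step of (5): the claim is false for general $\mathbf{b},\mathbf{c}$ because the full order conditions are highly nonlinear in $\mathbf{A}$, so the symmetry hypothesis on the nodes and weights must be used essentially to funnel the argument through the linear simplified conditions rather than through the full B-series expansion. The B-series step in (3) is likewise more delicate than it looks and must be carried out without circular appeal to any symmetry of the composed scheme.
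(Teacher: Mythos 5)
First, a point of comparison: the paper itself offers \emph{no} proof of Theorem~\ref{thm:sa1} --- it is quoted from \cite{Hairer83,Hairer02} (and \cite{Gan13} for the final claim) --- so your proposal has to stand on its own. Items (1), (2), (4) and the symmetry claim in (5) are correct as you outline them: the involution computation $s+1-(s+1-i)=i$, the identification of $\Phi^*=\Phi$ with the relations of Theorem~\ref{thm:symmetric1}, the identity $m^*_{ij}=-m_{s+1-i,\,s+1-j}$ for the matrix in \eqref{eq:M}, and the entry-by-entry verification of symmetry for the average via $a^*_{ij}+a_{s+1-i,s+1-j}=b_{s+1-j}$ all check out. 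Two small repairs in (3): the paper never asserts that the step map of $\bar{\Phi}=(\mathbf{e}\mathbf{b}^T-\mathbf{A},\mathbf{b},\mathbf{e}-\mathbf{c})$ equals $\Phi(-h)^{-1}$, so you must derive it (solve the stage equations of $\Phi(-h)$ for $y_n$ in terms of $y_{n+1}$; the internal stages become $k_i=f\bigl(y_{n+1}+h\sum_j(b_j-a_{ij})k_j\bigr)$, which is exactly $\bar{\mathbf{A}}$); also your argument gives only that $\Phi^*$ has order $\geq p$, and equality requires applying the same argument to $(\Phi^*)^*=\Phi$ via (1). The composition claim needs no B-series machinery: two maps with local error $O(h^{p+1})$ compose, by Lipschitz transport of the first local error, into a map with local error $O(h^{p+1})$.

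The genuine gap is the order-preservation step in (5). You propose to funnel the claim through the simplified conditions $B(p),C(\eta),D(\zeta)$, which are linear in $\mathbf{A}$ for fixed $\mathbf{b},\mathbf{c}$. But Theorem~\ref{thm:1} gives \emph{sufficient}, not necessary, conditions: an order-$p$ method with symmetric $\mathbf{b},\mathbf{c}$ need not satisfy $C(\eta),D(\zeta)$ at any strength certifying order $p$, so your reduction proves a strictly weaker statement than the theorem asserts. Concretely, Kutta's $3/8$ rule has order $4$ with symmetric data $\mathbf{b}=(\tfrac18,\tfrac38,\tfrac38,\tfrac18)$, $\mathbf{c}=(0,\tfrac13,\tfrac23,1)$, but, being explicit, satisfies only $C(1)$ (together with $B(4)$ and $D(1)$), so Theorem~\ref{thm:1} certifies at most order $3$ and your funnel never engages; nevertheless the conclusion of (5) holds for it --- one can check directly that $\tilde{\mathbf{A}}=(\mathbf{A}+\mathbf{A}^*)/2$ satisfies all four order-$4$ tree conditions, including the genuinely quadratic one $\mathbf{b}^T\tilde{\mathbf{A}}\tilde{\mathbf{A}}\mathbf{c}=\tfrac1{24}$. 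This is precisely the regime the paper exploits in Section 4, where the explicit methods satisfy only $C(1)$ and low $D$-conditions yet the averaged method is claimed to retain order $5$. A correct general proof must work with the full elementary weights: using $\tilde{P}\mathbf{b}=\mathbf{b}$ and $\tilde{P}\mathbf{c}=\mathbf{e}-\mathbf{c}$ one writes
\begin{equation*}
\tilde{\mathbf{A}}=\tfrac12\left(\mathbf{e}\mathbf{b}^T+\mathbf{A}-\tilde{P}\mathbf{A}\tilde{P}^T\right),
\end{equation*}
and expands the weight of each rooted tree for $\tilde{\mathbf{A}}$, invoking the order-$p$ relations of both $\Phi$ and $\Phi^*$ to collapse the cross terms; this nonlinear bookkeeping is the content of \cite{Gan13} and cannot be replaced by linearity of the simplified conditions. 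Your closing caution correctly senses the danger, but the remedy you choose --- restricting to the linear simplified conditions --- does not prove the statement as it stands.
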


\subsection{Symplectic-adjoint method}

According to our earlier discussion, by the symplecticity condition for a partitioned Runge-Kutta method, namely \eqref{eq:sprk1}, one can solve it to obtain,
\begin{equation}\label{2.6}
\bar{a}_{ij}=b_j(1-\frac{a_{ji}}{b_i}),\quad b_i\neq0,\quad i,j=1,\cdots,s.
\end{equation}
Based on such an observation, we introduce the following notion of symplectic-adjoint methods.

\begin{defn}\label{def:smpa}
For an $s$-stage Runge-Kutta method $\Phi=(\mathbf{A, b, c})$, $b_i\neq 0, i=1, 2, \ldots, s$, we set $\Phi^{s*}=(\mathbf{A}^{s*}, \mathbf{b, c})$ with
\begin{equation}\label{eq:smpa1}
a_{ij}^{s*}=b_j(1-\frac{a_{ji}}{b_i}),\quad b_i\neq0,\quad i,j=1,\cdots,s.
\end{equation}
$\Phi^{s*}$ is called the symplectic-adjoint method of $\Phi$.
\end{defn}

\begin{thm}\label{thm:smpa1}
Let $\Phi$ be an $s$-stage Runge-Kutta method and $\Phi^{s*}$ be its symplectic-adjoint method. Then there holds the following properties
\begin{enumerate}
\item $(\Phi^{s*})^{s*}=\Phi$;

\item If $\Phi^{s*}=\Phi$, then $\Phi$ is symplectic;

\item $\Phi^{s*}$ and $\Phi$ possess the same order;

\item $\Phi^{s*}$ is symmetric if and only if $\Phi$ is symmetric;

\item The algebraic average of $\Phi$ and $\Phi^{s*}$, namely $((\mathbf{A}+\mathbf{A}^{s*})/2, \mathbf{b, c})$ is a symplectic method, and moreover, it possess the same order of $\Phi$.
\end{enumerate}
\end{thm}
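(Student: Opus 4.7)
The plan splits into two halves: properties (1), (2), (4), and the symplecticity half of (5) are algebraic consequences of the single defining identity, while property (3) and the order half of (5) are the substantive content. I would rewrite $a_{ij}^{s*}=b_j(1-a_{ji}/b_i)$ as $b_ia_{ij}^{s*}+b_ja_{ji}=b_ib_j$ and use this symmetric form throughout.

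For the algebraic parts, (1) is a one-line substitution that collapses to $a_{ij}$, and (2) is immediate because $\Phi^{s*}=\Phi$ forces $b_ia_{ij}+b_ja_{ji}=b_ib_j$, which is the symplecticity condition \eqref{eq:ss0}. For (4), I would assume the symmetry relations from Theorem~\ref{thm:symmetric1} and compute $a_{s+1-i,s+1-j}^{s*}$ using the derived identity $a_{s+1-j,s+1-i}=b_i-a_{ji}$ together with $b_{s+1-i}=b_i$; bookkeeping reduces the right-hand side of the symmetry condition for $\Phi^{s*}$ back to $a_{ij}^{s*}=b_{s+1-j}(1-a_{ji}/b_i)$, and the converse follows from the involution (1). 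The symplecticity of the average in (5) follows from $b_i\tilde a_{ij}+b_j\tilde a_{ji}=\tfrac12(b_ia_{ij}+b_ja_{ji})+\tfrac12(b_ia_{ij}^{s*}+b_ja_{ji}^{s*})=b_ib_j$, where the second bracket is again $b_ib_j$ by the defining identity read in the opposite direction.

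For (3), my main tool is that, by construction, the pair $(\Phi,\Phi^{s*})$ satisfies the coupling relation \eqref{eq:sprk1} with $\bar{b}_i=b_i$, hence is a symplectic partitioned Runge-Kutta method in the sense of Theorem~\ref{thm:ss5}. The idea is to apply this PRK to the auxiliary Hamiltonian $H(p,q)=p^{T}f(q)$ for arbitrary smooth $f$: since $\partial_p H=f(q)$ depends only on $q$, the $q$-stages decouple from $p$, and the numerical $q_{n+1}$ is exactly one step of $\Phi^{s*}$ applied to $y'=f(y)$. Once the PRK is known to be of order $p$, Proposition~\ref{prop:ss9} forces $\Phi^{s*}$ to have order at least $p$; the reverse inequality then follows by swapping the roles of $\Phi$ and $\Phi^{s*}$ and invoking the involution (1). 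The main obstacle I anticipate is establishing PRK order $p$ from the sole assumption that $\Phi$ has order $p$, since a naive B-series expansion produces coupling conditions on bicoloured rooted trees that are not automatic from the monochromatic order conditions on $\Phi$. The route I would take is to substitute $a_{ij}^{s*}=b_j-b_ja_{ji}/b_i$ into each bicoloured elementary weight and use the pairing $b_ia_{ij}^{s*}+b_ja_{ji}=b_ib_j$ to match edges of opposite colour, expecting the cancellations to collapse every bicoloured weight to a combination of monochromatic weights for $\Phi$ that vanish by hypothesis.

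For the order half of (5), I would exploit that $\Phi^{\mathrm{avg}}$ is already symplectic and apply Theorem~\ref{thm:symp2}: it then suffices to verify $B(p)$ together with $C(\eta)$ for some $\eta$ with $p\le 2\eta+1$. Since $\mathbf{b}$ and $\mathbf{c}$ are unchanged by the averaging, $B(p)$ is automatic. For $C(\eta)$, I would split $\tilde a_{ij}=\tfrac12(a_{ij}+a_{ij}^{s*})$: the first half contributes $C(q)$ for $\Phi$, while the second half rearranges, through the defining identity, into an expression controlled by $D(q)$ for $\Phi$; when both simplified conditions hold for $\Phi$, each piece equals $c_i^q/q$ and the sum gives $C(\eta)$ for $\Phi^{\mathrm{avg}}$. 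The remaining subtlety, shared with property (3), is passing from the abstract hypothesis ``$\Phi$ is of order $p$'' to the simplified-condition form that Theorem~\ref{thm:1} and Theorem~\ref{thm:symp2} digest, which I would handle by the same B-series reduction outlined above.
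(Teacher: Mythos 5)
Your treatment of properties (1), (2), (4) and the symplecticity half of (5) is correct and complete: the symmetric form $b_ia_{ij}^{s*}+b_ja_{ji}=b_ib_j$ does all the work, and this is exactly the ``direct verification from the definition'' that the paper's one-line proof alludes to (the paper supplies no details, so for these parts you have actually written out what the authors omitted). One cosmetic slip: in your average computation the second bracket equals $2b_ib_j-(b_ia_{ij}+b_ja_{ji})$, not $b_ib_j$; the displayed total $b_i\tilde a_{ij}+b_j\tilde a_{ji}=b_ib_j$ is nevertheless right, so $M=0$ and the average is symplectic unconditionally.

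The genuine gap is the one you flagged yourself in property (3) and the order half of (5), and it is worse than an unproven step: the ``expected collapse'' of the bicoloured (respectively simplified) order conditions from the bare hypothesis that $\Phi$ has order $p$ is false. Concretely, take Ralston's third-order method,
\begin{equation*}
\mathbf{c}=\Bigl(0,\tfrac12,\tfrac34\Bigr),\qquad \mathbf{b}=\Bigl(\tfrac29,\tfrac13,\tfrac49\Bigr),\qquad a_{21}=\tfrac12,\ a_{31}=0,\ a_{32}=\tfrac34,
\end{equation*}
which has order $3$ and all $b_i\neq0$. Its symplectic-adjoint has row sums $\sum_j a_{ij}^{s*}=1-\frac{1}{b_i}\sum_j b_ja_{ji}=\bigl(\tfrac14,0,1\bigr)$, and the third-order bushy-tree condition fails:
\begin{equation*}
\sum_{i=1}^{3} b_i\Bigl(\sum_{j=1}^{3}a_{ij}^{s*}\Bigr)^{2}=\tfrac29\cdot\tfrac1{16}+\tfrac13\cdot 0+\tfrac49\cdot 1=\tfrac{11}{24}\neq\tfrac13,
\end{equation*}
so $\Phi^{s*}$ has order $2$, not $3$. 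The obstruction is precisely that this method violates $D(1)$ (e.g.\ $\sum_i b_ia_{i1}=\tfrac16\neq b_1(1-c_1)=\tfrac29$), equivalently $\mathbf{A}^{s*}\mathbf{e}\neq\mathbf{c}$, so $\Phi^{s*}$ is not even stage-consistent with the retained nodes. Thus your route through Proposition~\ref{prop:ss9} stalls at its hypothesis (you cannot establish that the pair $(\Phi,\Phi^{s*})$ has order $p$), and no B-series cancellation can rescue it without the additional assumption $D(1)$ for $\Phi$ --- note that every example in the paper (Euler, Kutta's third-order method, RK4, the Radau and Lobatto families) happens to satisfy $D(1)$, and in Section~4 the construction explicitly imposes $C(1)$ and $D(1)$, which is why the applications go through. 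The same caveat infects the order claim in (5), where your appeal to Theorem~\ref{thm:symp2} needs the simplified conditions $B(p)$, $C(\eta)$ for the averaged method, not merely abstract order $p$ for $\Phi$. The honest conclusion is that properties (3) and the order part of (5) require the hypothesis $D(1)$ (or stage consistency of $\Phi^{s*}$) to be added, a point the paper's ``directly verified'' proof glosses over as well.
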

\begin{proof}
The listed properties can be directly verified by using the definition of the symplectic-adjoint method.
\end{proof}

We note that the property $(5)$ in Theorem~\ref{thm:smpa1} provides a simple way of constructing symplectic Runge-Kutta method. For example, let us consider the Radau-IA method with $s=2$, one can proceed as follows,
\begin{equation*}
  \Phi:
  \begin{tabular}{c|cc}
  $0$  & $\frac{1}{4}$ & $-\frac{1}{4}$ \\
  $\frac{2}{3}$ & $\frac{1}{4}$  & $\frac{5}{12}$ \\
  \hline
    & $\frac{1}{4}$ & $\frac{3}{4}$ \\
\end{tabular}
\rightarrow
\Phi^{s*}:
  \begin{tabular}{c|cc}
  $0$  & $0$ & $0$ \\
  $\frac{2}{3}$ & $\frac{1}{3}$  & $\frac{1}{3}$ \\
  \hline
    & $\frac{1}{4}$ & $\frac{3}{4}$ \\
\end{tabular}
\Rightarrow
  \mbox{ by using (5) in Theorem~\ref{thm:smpa1}}
\Rightarrow
  \begin{tabular}{c|cc}
  $0$  & $\frac{1}{8}$ & $-\frac{1}{8}$ \\
  $\frac{2}{3}$ & $\frac{7}{24}$  & $\frac{3}{8}$ \\
  \hline
    & $\frac{1}{4}$ & $\frac{3}{4}$ \\
  \end{tabular},
\end{equation*}
where the resulting method is symplectic and known as the Radau-IB method with $s=2$.

Finally, we present an interesting property of the symmetric-adjoint and symplectic-adjoint methods.

\begin{thm}\label{thm:final}
Let $\Phi$ be an $s$-stage, $p$-order Runge-Kutta method which is assumed to be not $A$-stable. Then $\Phi^*$ and $\Phi^{s*}$ are both $A$-stable and at least of order $p$. Further more, if the stability function $R(z)$ of $\Phi$ satisfies $\lim_{|z|\rightarrow+\infty} R(z)=\infty$, then $\Phi^*$ and $\Phi^{s*}$ are $L$-stable.
\end{thm}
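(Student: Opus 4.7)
My overall plan is to reduce everything to a single algebraic identity relating the stability functions, namely $R^*(z)=R^{s*}(z)=1/R(-z)$. From this identity, order preservation is immediate, the $L$-stability claim becomes routine, and the $A$-stability claim reduces to a one-sided modulus estimate for $R$ on the closed right half-plane. The order-preservation part itself is handled by quoting Theorem~\ref{thm:sa1}(3) and Theorem~\ref{thm:smpa1}(3), so the genuine new content lies entirely in the two stability claims.

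First I would derive the identity for $R^*$. Starting from $\mathbf{A}^*=\tilde{P}(\mathbf{e}\mathbf{b}^T-\mathbf{A})\tilde{P}^T$ and $\mathbf{b}^*=\tilde{P}\mathbf{b}$, and using $\tilde{P}\mathbf{e}=\mathbf{e}$, one checks that $\mathbf{e}(\mathbf{b}^*)^T-\mathbf{A}^*=\tilde{P}\mathbf{A}\tilde{P}^T$. Since similarity by $\tilde{P}$ preserves determinants, substituting into \eqref{eq:sf2} gives
\begin{equation*}
R^*(z)=\frac{\det(I+z\mathbf{A})}{\det(I-z(\mathbf{e}\mathbf{b}^T-\mathbf{A}))}=\frac{1}{R(-z)}.
\end{equation*}
For $R^{s*}$, I would rewrite the defining relation $a_{ij}^{s*}=b_j(1-a_{ji}/b_i)$ as the matrix identity $B\mathbf{A}^{s*}=\mathbf{b}\mathbf{b}^T-\mathbf{A}^T B$ with $B=\mathrm{diag}(b_1,\ldots,b_s)$, observe that this gives $\mathbf{e}\mathbf{b}^T-\mathbf{A}^{s*}=B^{-1}\mathbf{A}^T B$, and then conclude via diagonal similarity by $B$ together with $\det M^T=\det M$ that $R^{s*}(z)=1/R(-z)$ as well. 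So the two adjoint constructions yield the same stability function and can be treated uniformly.

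Given the identity, the $L$-stability half is essentially a one-line calculation: once $A$-stability of the adjoints has been established, the hypothesis $\lim_{|z|\to\infty}R(z)=\infty$ immediately yields $\lim_{|z|\to\infty}R^*(z)=\lim_{|z|\to\infty}1/R(-z)=0$, which is precisely the extra condition required for $L$-stability.

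The main obstacle, and the step I expect to be genuinely delicate, is the $A$-stability assertion itself, which via the reciprocal identity is equivalent to the pointwise bound $|R(w)|\geq 1$ for every $w$ with $\Re w\geq 0$. My plan is to leverage the non-$A$-stability hypothesis, which supplies at least one point $z_0$ with $\Re z_0\leq 0$ and $|R(z_0)|>1$, together with the normalization $R(0)=1$ and the consistency expansion $R(z)=e^z+O(z^{p+1})$ coming from order $p\geq 1$. For $\Phi$ with polynomial stability function (in particular in the explicit case, where the non-$A$-stability hypothesis is automatic), $R$ is entire and the set $\{|R|\leq 1\}$ is compact, so a maximum/minimum-modulus argument on the closed right half-plane—comparing the growth of $R$ at infinity against the values $|R(x)|=e^x+O(x^{p+1})\geq 1$ along the positive real axis and the normalization $R(0)=1$ at the origin—should deliver the required inequality. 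The subtlety is that one must rule out, or carefully account for, zeros of $R$ in the right half-plane and the fine behavior of $|R|$ along the imaginary axis; I expect this to hinge on extracting more structural information from the non-$A$-stability hypothesis and the order-$p$ constraint than a naive max-modulus estimate provides.
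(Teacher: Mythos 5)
Your algebraic core is sound and, in fact, tighter than the paper's own treatment: the identities $R^*(z)=R^{s*}(z)=1/R(-z)$, which you derive from $\tilde P\mathbf{e}=\mathbf{e}$ (so that $\mathbf{e}(\mathbf{b}^*)^T-\mathbf{A}^*=\tilde P\mathbf{A}\tilde P^T$) and from $B\mathbf{A}^{s*}=\mathbf{b}\mathbf{b}^T-\mathbf{A}^TB$ with $B=\mathrm{diag}(b_1,\ldots,b_s)$, are exactly what the paper uses but never writes out --- its proof appeals to ``the definition of $\Phi^*$'' and to the relation $R(z)R^{s*}(z)=P(z)/P(-z)$ from \eqref{eq:sss9}, which was only stated for partitioned methods applied to linear Hamiltonian systems. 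Your order-preservation step (quoting Theorem~\ref{thm:sa1}(3) and Theorem~\ref{thm:smpa1}(3)) and your $L$-stability computation $\lim_{|z|\to\infty}R^*(z)=\lim_{|z|\to\infty}1/R(-z)=0$ coincide with the paper's.

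The genuine gap is the $A$-stability step, and your flagging it as ``delicate'' understates the problem: that step would fail, because the inequality $|R(w)|\geq 1$ for $\Re w\geq 0$ does not follow from the stated hypotheses --- it is actually inconsistent with them. Take the paper's own third illustrative example, the classical $3$-stage, order-$3$ explicit method with $R(z)=1+z+\frac{1}{2}z^2+\frac{1}{6}z^3$: it is not $A$-stable (no explicit method is), yet on the imaginary axis $|R(it)|^2=1-\frac{t^4}{12}+\frac{t^6}{36}=1+\frac{t^4}{36}\left(t^2-3\right)<1$ for $0<|t|<\sqrt{3}$, and since $R$ has real coefficients, $|R^*(it)|=|R^{s*}(it)|=1/|R(it)|>1$ there, so neither adjoint is $A$-stable; the order-$4$ example fails the same way, with $|R(it)|^2=1-\frac{t^6}{72}+\frac{t^8}{576}$. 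Hence no refinement of your maximum/minimum-modulus plan --- controlling zeros of $R$ in the right half-plane, exploiting the order-$p$ expansion, etc. --- can close this step: what is missing is an additional hypothesis, such as $|R(it)|\geq 1$ for all real $t$ together with $R$ zero-free on the closed right half-plane, under which your boundary-plus-maximum-modulus argument does go through (and does cover the paper's order-$1$ and order-$2$ examples). For comparison, the paper's own proof is no better at this point: it asserts $A$-stability of $\Phi^*$ ``since $\Phi^*(h/2)\circ\Phi(h/2)$ is symmetric,'' which is a non sequitur --- symmetry of the composition merely records the functional equation satisfied by $R(z/2)/R(-z/2)$ and implies nothing about $|R^*|\leq 1$ on the left half-plane. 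So you correctly located the fault line: the theorem's $A$-stability claim, as stated, is not provable, and the paper's one-line argument for it is not a proof.
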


\begin{proof}
By virtue of property (3) in Theorem~\ref{thm:sa1}, one sees that $\Phi^*$ is of order $p$. Since $\Phi^*(h/2)\circ\Phi(h/2)$ is symmetric, we know that $\Phi^*$ is $A$-stable. If the stability function $R(z)$ of $\Phi$ satisfies $\lim_{|z|\rightarrow+\infty} R(z)=\infty$, by the definition of $\Phi^*$, one readily verifies that $\Phi^*$ is $L$-stable. As for $\Phi^{s*}$, one can also obtain the statement of the theorem by using property (3) in Theorem~\ref{thm:smpa1}, and the fact that $R(z)R^{s*}(z)=P(z)/P(-z)$ from \eqref{eq:sss9}.

The proof is complete.
\end{proof}

To illustrate some interesting applications of Theorem~\ref{thm:final}, we next consider some specific Runge-Kutta methods, ranging from order 1 to order 4. The original methods are all explicit, but their adjoint methods are all $L$-stable.
\begin{equation*}
  \Phi:
  \begin{tabular}{c|c}
  $0$  & $0$ \\
  \hline
    & $1$ \\
\end{tabular},\quad
R(z)=(1+z),\quad
\Phi^{s*}:
  \begin{tabular}{c|c}
  $1$  & $1$ \\
  \hline
    & $1$ \\
\end{tabular},\quad
R^{s*}(z)=(1-z)^{-1},
\mbox{ $L$-stable };
\end{equation*}

\begin{equation*}
  \Phi:
  \begin{tabular}{c|cc}
  $0$  & $0$ & $0$ \\
  $1$ & $1$  & $0$ \\
  \hline
    & $\frac{1}{2}$ & $\frac{1}{2}$ \\
\end{tabular},\quad
R(z)=(1+z+\frac{1}{2}z^2),\quad
\Phi^{s*}:
  \begin{tabular}{c|cc}
  $0$  & $\frac{1}{2}$ & $-\frac{1}{2}$ \\
  $1$ & $\frac{1}{2}$  & $\frac{1}{2}$ \\
  \hline
    & $\frac{1}{2}$ & $\frac{1}{2}$ \\
\end{tabular},\quad
R^{s*}(z)=R(-z)^{-1},
\mbox{ $L$-stable};
\end{equation*}

\begin{equation*}
  \Phi:
  \begin{tabular}{c|ccc}
  $0$  & $0$ & $0$ & $0$ \\
  $\frac{1}{2}$ & $\frac{1}{2}$  & $0$ & $0$ \\
  $1$  & $-1$ & $2$ & $0$ \\
  \hline
    & $\frac{1}{6}$ & $\frac{2}{3}$ & $\frac{1}{6}$ \\
\end{tabular},\quad
R(z)=(1+z+\frac{1}{2}z^2+\frac{1}{6}z^3),\quad
\Phi^{s*}:
  \begin{tabular}{c|ccc}
  $0$  & $\frac{1}{6}$ & $-\frac{4}{3}$ & $\frac{7}{6}$ \\
  $\frac{1}{2}$ & $\frac{1}{6}$  & $\frac{2}{3}$ & $-\frac{1}{3}$ \\
  $1$  & $\frac{1}{6}$ & $\frac{2}{3}$ & $\frac{1}{6}$ \\
  \hline
    & $\frac{1}{6}$ & $\frac{2}{3}$ & $\frac{1}{6}$ \\
\end{tabular},
\end{equation*}
where $R^{s*}(z)=R(-z)^{-1}$ and is $L$-stable, and
\begin{equation*}
  \Phi:
  \begin{tabular}{c|cccc}
  $0$  & $0$ & $0$ & $0$ & $0$ \\
  $\frac{1}{2}$ & $\frac{1}{2}$  & $0$ & $0$ & $0$ \\
  $\frac{1}{2}$  & $0$ & $\frac{1}{2}$ & $0$ & $0$ \\
  $1$  & $0$ & $0$ & $1$ & $0$ \\
  \hline
    & $\frac{1}{6}$ & $\frac{1}{3}$ & $\frac{1}{3}$ & $\frac{1}{6}$ \\
\end{tabular},\quad
R(z)=\sum_{i=1}^4(\frac{1}{i!}z^i+1),\quad
\Phi^{s*}:
  \begin{tabular}{c|cccc}
  $0$  & $\frac{1}{6}$ & $-\frac{2}{3}$ & $\frac{1}{3}$ & $\frac{1}{6}$ \\
  $\frac{1}{2}$ & $\frac{1}{6}$  & $\frac{1}{3}$ & $-\frac{1}{6}$ & $\frac{1}{6}$ \\
  $\frac{1}{2}$  & $\frac{1}{6}$ & $\frac{1}{3}$ & $\frac{1}{3}$ & $-\frac{1}{3}$ \\
  $1$  & $\frac{1}{6}$ & $\frac{1}{3}$ & $\frac{1}{3}$ & $\frac{1}{6}$ \\
  \hline
    & $\frac{1}{6}$ & $\frac{1}{3}$ & $\frac{1}{3}$ & $\frac{1}{6}$ \\
\end{tabular},
\end{equation*}
where $R^{s*}(z)=R(-z)^{-1}$ and is $L$-stable. For the classical Lobatto-type and Radau-type methods (cf. \cite{Hairer83}), we have the following interesting connections:

\begin{enumerate}

\item If $\Phi$ is a Lobatto-III-A method, then $\Phi^{s*}$ is a Lobatto-III-B method. Furthermore, by using property (5) in Theorem~\ref{thm:smpa1}, one can obtain Lobatto-III-S methods.

\item If $\Phi$ is a Lobatto-III-C method, then $\Phi^{s*}$ is a Lobatto-III-E method (cf. \cite{Norsett81}); e.g.,
\begin{equation*}
  \Phi:
  \begin{tabular}{c|ccc}
  $0$  & $\frac{1}{6}$ & $-\frac{1}{3}$ & $\frac{1}{6}$ \\
  $\frac{1}{2}$ & $\frac{1}{6}$  & $\frac{5}{12}$ & $-\frac{1}{12}$ \\
  $1$  & $\frac{1}{6}$ & $\frac{2}{3}$ & $\frac{1}{6}$ \\
  \hline
    & $\frac{1}{6}$ & $\frac{2}{3}$ & $\frac{1}{6}$ \\
\end{tabular}\quad
\Rightarrow\quad
\Phi^{s*}:
  \begin{tabular}{c|ccc}
  $0$  & $0$ & $0$ & $0$ \\
  $\frac{1}{2}$ & $\frac{1}{4}$  & $\frac{1}{4}$ & $0$ \\
  $1$  & $0$ & $1$ & $0$ \\
  \hline
    & $\frac{1}{6}$ & $\frac{2}{3}$ & $\frac{1}{6}$ \\
\end{tabular}\quad
\Rightarrow\quad
\Phi^{s}:
  \begin{tabular}{c|ccc}
  $0$  & $\frac{1}{12}$ & $-\frac{1}{6}$ & $\frac{1}{12}$ \\
  $\frac{1}{2}$ & $\frac{5}{24}$  & $\frac{1}{3}$ & $-\frac{1}{24}$ \\
  $1$  & $\frac{1}{12}$ & $\frac{5}{6}$ & $\frac{1}{12}$ \\
  \hline
    & $\frac{1}{6}$ & $\frac{2}{3}$ & $\frac{1}{6}$ \\
\end{tabular}.
\end{equation*}

\item For Radau-type methods, we have the following examples
\begin{equation*}
  \Phi:
  \mbox{Radau-I-A, $s=2$, both algebraically and $A$-stable},
  \begin{tabular}{c|cc}
  $0$  & $\frac{1}{4}$ & $-\frac{1}{4}$ \\
  $\frac{2}{3}$ & $\frac{1}{4}$  & $\frac{5}{12}$ \\
  \hline
    & $\frac{1}{4}$ & $\frac{3}{4}$ \\
\end{tabular}
\end{equation*}
\begin{equation*}
\Rightarrow\quad
\Phi^{s*}:
  \begin{tabular}{c|cc}
  $0$  & $0$ & $0$ \\
  $\frac{2}{3}$ & $\frac{1}{3}$  & $\frac{1}{3}$ \\
  \hline
    & $\frac{1}{4}$ & $\frac{3}{4}$
\end{tabular},\ \
  \mbox{ non-$A$-stable }
\end{equation*}
\begin{equation*}
\Rightarrow\quad
  \Phi^s:
  \mbox{ Radau-I-B, symplectic}, \quad
  \begin{tabular}{c|cc}
  $0$  & $\frac{1}{8}$ & $-\frac{1}{8}$ \\
  $\frac{2}{3}$ & $\frac{7}{24}$  & $\frac{3}{8}$ \\
  \hline
    & $\frac{1}{4}$ & $\frac{3}{4}$ \\
\end{tabular}
\end{equation*}
\begin{equation*}
\Rightarrow\quad
  (\Phi^s)^*:
  \mbox{ Radau-II-B, symplectic },\quad
  \begin{tabular}{c|cc}
  $\frac{1}{3}$  &$\frac{3}{8}$ & $-\frac{1}{24}$ \\
  $1$ & $\frac{7}{8}$ & $\frac{1}{8}$ \\
  \hline
    & $\frac{3}{4}$ & $\frac{1}{4}$ \\
  \end{tabular}\ \ .
\end{equation*}
On the other hand,
\begin{equation*}
  \Phi:
  \mbox{Radau-I-A, both algebraically and $L$-stable, }
  \begin{tabular}{c|cc}
  $0$  & $\frac{1}{4}$ & $-\frac{1}{4}$ \\
  $\frac{2}{3}$ & $\frac{1}{4}$  & $\frac{5}{12}$ \\
  \hline
    & $\frac{1}{4}$ & $\frac{3}{4}$ \\
\end{tabular}
\end{equation*}
\begin{equation*}
\Rightarrow\quad
\Phi^{s*}:
  \begin{tabular}{c|cc}
  $0$  & $0$ & $0$ \\
  $\frac{2}{3}$ & $\frac{1}{3}$  & $\frac{1}{3}$ \\
  \hline
    & $\frac{1}{4}$ & $\frac{3}{4}$
\end{tabular}\ \ ,
  \mbox{ non-$A$-stable }
\end{equation*}
\begin{equation*}
\Rightarrow\quad
  (\Phi^{s*})^*:
  \mbox{ Radau-II-A, both algebraically and $L$-stable },\quad
  \begin{tabular}{c|cc}
  $\frac{1}{3}$  &$\frac{5}{12}$ & $-\frac{1}{12}$ \\
  $1$ & $\frac{3}{4}$ & $\frac{1}{4}$ \\
  \hline
    & $\frac{3}{4}$ & $\frac{1}{4}$ \\
  \end{tabular}.
\end{equation*}
In principle, by following a similar manner, from Radau-I-A type methods, one can obtain all the Radau type methods through the symmetric-adjoint and symplectic-adjoint approaches.

\end{enumerate}

\section{Construction of high-order explicit Runge-Kutta methods}
\subsection{Construction of explicit Runge-Kutta methods with stage 6 and order 5}

In this section, we consider the application of symmetric-adjoint and symplectic-adjoint methods in significantly simplifying the construction of high-order explicit Runge-Kutta methods. To that end, we present the construction procedure of a class of novel explicit Runge-Kutta methods of order 5 and stage 6.

Let $\Phi=(\mathbf{A, b, c})$ is the method to be determined, of the following form,
\begin{equation}\label{method2.1}
\Phi:
\begin{tabular}{c|cccccc}
$c_1=0$ & $0$\\
$c_2$ & $a_{21}$ & $0$\\
$c_3$ & $a_{31}$ & $a_{32}$ & $0$\\
$c_4$ & $a_{41}$ & $a_{42}$ & $a_{43}$ & $0$\\
$c_5$ & $a_{51}$ & $a_{52}$ & $a_{53}$ & $a_{54}$ & $0$\\
$c_6$ & $a_{61}$ & $a_{62}$ & $a_{63}$ & $a_{64}$ & $a_{65}$ & $0$\\
\hline
$ $ & $b_1$ & $b_2$ & $b_3$ & $b_4$ & $b_5$ & $b_6$
\end{tabular}
\end{equation}
where it is required that $b_i\neq 0$, $i=1,2,\ldots, 6$ and there are totally 21 coefficients to be found. According to the order conditions (cf. \cite{Hairer83}), there are the following order conditions satisfied by the aforesaid coefficients,
\begin{equation*}
ERK(5):
\begin{cases}
t_{41}: \sum b_i=1,\\
t_{42}: \sum b_ic_i=1/2,\\
t_{43}: \sum b_ic_i^2=1/3,\\
t_{44}: \sum b_ic_i^3=1/4,\\
t_{45}: \sum b_ia_{ij}c_j=1/6,\\
t_{46}: \sum b_ic_ia_{ij}c_j=1/8,\\
t_{47}: \sum b_ia_{ij}c_j^2=1/12,\\
t_{48}: \sum b_ia_{ij}a_{jk}c_k=1/24,
\end{cases}
\quad
\begin{cases}
t_{59}:  \sum b_ic_i^4=1/5,\\
t_{510}: \sum b_ic_i^2a_{ij}c_j=1/10,\\
t_{511}: \sum b_ic_ia_{ij}c_j^2=1/15,\\
t_{512}: \sum b_ic_ia_{ij}a_{jk}c_k=1/30,\\
t_{513}: \sum b_i(\sum a_{ij}c_j)^2=1/20,\\
t_{514}: \sum b_ia_{ij}c_j^3=1/20,\\
t_{515}: \sum b_ia_{ij}c_ja_{jk}c_k=1/40,\\
t_{516}: \sum b_ia_{ij}a_{jk}c_k^2=1/60,\\
t_{517}: \sum b_ia_{ij}a_{jk}a_{km}c_m=1/120.
\end{cases}
\end{equation*}
By Definition~\ref{def:smpa}, one has
\begin{equation}\label{method2.2}
\Phi^{s*}:
\begin{tabular}{c|c}
$\mathbf{c}$  & $\mathbf{A}^{s*}$  \\
  \hline
    $ $  & $\mathbf{b}^{T}$ \\
\end{tabular}
\mbox{\ \ where $a_{ij}^{s*}=b_j(1-\frac{a_{ji}}{b_i})$, $b_i\neq0$, i, j=1,...,6,}
\end{equation}
\begin{equation*}
\begin{tabular}{c|cccccc}
$c_1=0$ & $b_1$ & $b_2(1-\frac{a_{21}}{b_1})$ & $b_3(1-\frac{a_{31}}{b_1})$ & $b_4(1-\frac{a_{41}}{b_1})$ & $b_5(1-\frac{a_{51}}{b_1})$ & $b_6(1-\frac{a_{61}}{b_1})$\\
$c_2$ & $b_1$ & $b_2$ & $b_3(1-\frac{a_{32}}{b_2})$ & $b_4(1-\frac{a_{42}}{b_2})$ & $b_5(1-\frac{a_{52}}{b_2})$ & $b_6(1-\frac{a_{62}}{b_2})$\\
$c_3$ & $b_1$ & $b_2$ & $b_3$ & $b_4(1-\frac{a_{43}}{b_3})$ & $b_5(1-\frac{a_{53}}{b_3})$ & $b_6(1-\frac{a_{63}}{b_3})$\\
$c_4$ & $b_1$ & $b_2$ & $b_3$ & $b_4$ & $b_5(1-\frac{a_{54}}{b_4})$ & $b_6(1-\frac{a_{64}}{b_4})$\\
$c_5$ & $b_1$ & $b_2$ & $b_3$ & $b_4$ & $b_5$ & $b_6(1-\frac{a_{65}}{b_5})$\\
$c_5$ & $b_1$ & $b_2$ & $b_3$ & $b_4$ & $b_5$ & $b_6$\\
\hline
$ $   & $b_1$ & $b_2$ & $b_3$ & $b_4$ & $b_5$ & $b_6$
\end{tabular}\ \ .
\end{equation*}
By property (5) in Theorem~\ref{thm:smpa1}, we let $\Phi^s=((\mathbf{A}+\mathbf{A}^{s*})/2, \mathbf{b}, \mathbf{c})$, given by
\begin{equation}\label{2.3}
\begin{tabular}{c|cccccc}
$c_1=0$ & $b_1/2$ & $b_2(1-\frac{a_{21}}{b_1})/2$ & $b_3(1-\frac{a_{31}}{b_1})/2$ & $b_4(1-\frac{a_{41}}{b_1})/2$ & $b_5(1-\frac{a_{51}}{b_1})/2$ & $b_6(1-\frac{a_{61}}{b_1})/2$\\
$c_2$ & $(b_1+a_{21})/2$ & $b_2/2$ & $b_3(1-\frac{a_{32}}{b_2})/2$ & $b_4(1-\frac{a_{42}}{b_2})/2$ & $b_5(1-\frac{a_{52}}{b_2})/2$ & $b_6(1-\frac{a_{62}}{b_2})/2$\\
$c_3$ & $(b_1+a_{31})/2$ & $(b_2+a_{32})/2$ & $b_3/2$ & $b_4(1-\frac{a_{43}}{b_3})/2$ & $b_5(1-\frac{a_{53}}{b_3})/2$ & $b_6(1-\frac{a_{63}}{b_3})/2$\\
$c_4$ & $(b_1+a_{41})/2$ & $(b_2+a_{42})/2$ & $(b_3+a_{43})/2$ & $b_4/2$ & $b_5(1-\frac{a_{54}}{b_4})/2$ & $b_6(1-\frac{a_{64}}{b_4})/2$\\
$c_5$ & $(b_1+a_{51})/2$ & $(b_2+a_{52})/2$ & $(b_3+a_{53})/2$ & $(b_4+a_{54})/2$ & $b_5/2$ & $b_6(1-\frac{a_{65}}{b_5})/2$\\
$c_5$ & $(b_1+a_{61})/2$ & $(b_2+a_{62})/2$ & $(b_3+a_{63})/2$ & $(b_4+a_{64})/2$ & $(b_5+a_{65})/2$ & $b_6/2$\\
\hline
$ $   & $b_1$ & $b_2$ & $b_3$ & $b_4$ & $b_5$ & $b_6$
\end{tabular}.
\end{equation}
We know that $\Phi^s$ is a symplectic Runge-Kutta method of order $5$ and stage 6. It is further assumed that $\Phi^{s*}=\Phi^{*}$  and hence $\Phi^s$ becomes a symmetric and symplectic method. By our earlier study, if $\Phi$ satisfies the simplified order conditions $B(5)$, $C(\eta)$ and $D(\zeta)$ with $\eta=\zeta=2$, then it is of order 5.

By $\Phi^{s*}=\Phi^{*}$, we first have
\begin{equation}\label{2.4}
b_ia_{ij}=b_ja_{s+1-j,s+1-i}, i=2,...,6, j=1,...,6,
\end{equation}
namely,
\begin{equation*}
b_6a_{65}=b_5a_{21}, b_6a_{64}=b_4a_{31}, b_6a_{63}=b_3a_{41}, b_6a_{62}=b_2a_{51}, b_5a_{54}=b_4a_{32}, b_5a_{53}=b_3a_{42}.
\end{equation*}
that is,
\begin{equation*}
b_1a_{65}=b_2a_{21}=b_2c_2, b_1a_{64}=b_3a_{31}, b_1a_{63}=b_3a_{41}, b_1a_{62}=b_2a_{51}, b_2a_{54}=b_3a_{32}, b_2a_{53}=b_3a_{42}.
\end{equation*}
By $C(1)$ and $D(1)$ for $\Phi$, we have
\begin{equation}\label{2.5}
C(1): \sum_{j=1}a_{ij}=c_i, i=1,...,6, \mbox{i.e.},\ \
\begin{cases}
a_{21}=c_2,\\
a_{31}+a_{32}=c_3,\\
\vdots\\
a_{61}+a_{62}+...+a_{65}=1.
\end{cases}
\end{equation}

\begin{equation}\label{eq:2.6}
D(1): \sum_{i=1}b_ia_{ij}=b_j(1-c_j), j=1,...,6, \mbox{i.e., }
\begin{cases}
b_2a_{21}+b_3a_{31}+...+b_6a_{61}=b_1(1-c_1)=b_1,\\
b_3a_{32}+b_4a_{42}+b_5a_{52}+b_6a_{62}=b_2(1-c_2),\\
b_4a_{43}+b_5a_{53}+b_6a_{63}=b_3(1-c_3),\\
b_5a_{54}+b_6a_{64}=b_4(1-c_4),\\
b_6a_{65}=b_5(1-c_5),\\
0=b_6(1-c_6)\Rightarrow c_6=1.
\end{cases}
\end{equation}
From \eqref{eq:2.6}, one easily obtains that

\begin{equation}\label{eq:2.6b}
b_3A_3+b_4A_4+b_5A_5+b_6A_6=\frac{1}{6},
\end{equation}
where
\[
A_3=a_{32}c_2, A_4=a_{42}c_2+a_{43}c_3, A_5=a_{52}c_2+a_{53}c_3+a_{54}c_4, A_6=a_{62}c_2+a_{63}c_3+a_{64}c_4+a_{65}c_5.
\]
By Proposition~\ref{prop:2}, we see that the requirement of the first equation in \eqref{eq:2.6} can be removed. In addition, it can directly verified that if $\Phi$ satisfies $C(1)$ and $D(1)$, then $\Phi^s$ satisfies the corresponding order conditions $C(1)$ and $D(1)$. Hence, in order to imposed those order conditions on $\Phi^s$, it suffices for us to consider \eqref{2.5} and \eqref{eq:2.6}.

By $\Phi^{s*}=\Phi^s$, we have $c_1=0, c_2=1-c_5, c_3=1-c_4, c_6=1-c_1$ and $b_i=b_{s+1-i}, i=1,...,6,$ which together with \eqref{2.4} can simplify \eqref{eq:2.6} to
\begin{equation}\label{eq:2.6n}
\begin{cases}
b_6a_{62}+b_5a_{52}+b_4a_{42}+b_3a_{32}=b_2(1-c_2),\\
b_6a_{63}+b_5a_{53}+b_4a_{43}=b_3(1-c_3), b_5a_{53}=b_3c_{42},\\
b_6a_{64}+b_5a_{54}=b_4(1-c_4)=b_3c_3, b_5a_{54}=b_3a_{32},\\
b_6a_{65}=b_5(1-c_5)=b_2c_2.
\end{cases}
\end{equation}
Now, we treat $c_1, c_2$ as parameters with freedom, and it is clear that if $b_1, b_2, b_3$ and $a_{32}, a_{42}, a_{43}, a_{52}$ are determined, then so are the rest of the coefficients by \eqref{2.4}--\eqref{eq:2.6}.

We proceed to determine the aforesaid parameters. By Proposition~\ref{prop:2} and Theorem~\ref{thm:symp2}, we could further simply the relevant order conditions. First, using $B(5)$, we have
\begin{equation}\label{2.7}
\begin{cases}
b_1+b_2+b_3=1/2,\\
b_2c_2(1-c_2)+b_3c_3(1-c_3)=1/12,\\
b_2c_2^2(1-c_2)^2+b_3c_3^2(1-c_3)^2=1/60,
\end{cases}
\end{equation}
which further yields
\begin{equation}\label{2.8}
\begin{cases}
\displaystyle{b_1=b_6=\frac{1}{2}-\frac{5(c_2(1-c_2)+c_3(1-c_3))-1}{60c_2c_3(1-c_2)(1-c_3)}},\medskip\\
\displaystyle{b_2=b_5=\frac{-\frac{1}{12}c_3(1-c_3)+\frac{1}{60}}{c_2(1-c_2)(c_2(1-c_2)-c_3(1-c_3))}},\medskip\\
\displaystyle{b_3=b_4=\frac{\frac{1}{12}c_2(1-c_2)-\frac{1}{60}}{c_3(1-c_3)(c_2(1-c_2)-c_3(1-c_3))}}.
\end{cases}
\end{equation}
Next, by $D(2):\sum_{i=1}^6b_ic_ia_{ij}^s=\frac{1}{2}b_j(1-c_j^2), j=1,...,6$, where $a_{ij}^s=\frac{1}{2}(a_{ij}+a_{ij}^{s*}), a_{ij}^{s*}=b_j(1-\frac{a_{ji}}{b_i}), i,j=1,...,6$, we have by direct calculations that $D(2):\sum_{i=1}^6(b_ia_{ij}-b_ja_{ji})c_i+\sum_{i=1}^6b_ic_ib_j=b_j(1-c_j^2), j=1,...,6$, where $b_j\sum_{i=1}^6b_ic_i=\frac{1}{2}b_j$. Note that $D(1):\sum_{i=1}^6b_ia_{ij}=b_j(1-c_j), j=1,...6$. Subtracting $D(1)$ from $D(2)$, we finally have
\begin{equation}\label{2.9}
\sum_{i=1}^6((1-c_i)b_ia_{ij}+c_ib_ja_{ji})=b_j(\frac{1}{2}-c_j+c_j^2):=D_j,  j=1,...,6,
\end{equation}
namely,
\begin{equation}\label{eq:2.9b}
\begin{cases}
b_2(1-c_2)a_{21}+b_3(1-c_3)a_{31}+b_4(1-c_4)a_{41}+b_5(1-c_5)a_{51}=b_1(\frac{1}{2}-c_1+c_1^2)=\frac{1}{2}b_1=D_1,\\
b_3(1-c_3)a_{32}+b_4(1-c_4)a_{42}+b_5(1-c_5)a_{52}=b_2(\frac{1}{2}-c_2+c_2^2)=D_2,\\
b_3A_3+b_4(1-c_4)a_{43}+b_5(1-c_5)a_{53}=b_3(\frac{1}{2}-c_3+c_3^2)=D_3,\\
b_4A_4+b_5(1-c_5)a_{54}=b_4(\frac{1}{2}-c_4+c_4^2)=D_4,\\
b_5A_5=b_5(\frac{1}{2}-c_5+c_5^2)=D_5,\\
\mbox{where}\ \ A_3=a_{32}c_2, A_4=(a_{42}c_2+a_{43}c_3), A_5=(a_{52}c_2+a_{53}c_3+a_{54}c_4),\\
b_6A_6=b_6(\frac{1}{2}-c_6+c_6^2):=D_6,\ A_6=(a_{62}c_2+a_{63}c_3+a_{64}c_4+a_{65}c_5),
\end{cases}
\end{equation}
and also $b_5(1-c_5)a_{53}=b_3a_{42}c_2,b_5(1-c_5)a_{54}=b_3A_3$.

By Propositions \ref{prop:1}-\ref{prop:2}, we see that there are only two independent relationships in \eqref{eq:2.9b}, which can be chosen be the second the third ones, namely,
\begin{equation}\label{eq:c1}
\begin{cases}
b_3(1-c_3)a_{32}+b_4(1-c_4)a_{42}+b_5(1-c_5)a_{52}=D_2,\\
b_3A_3+b_4A_4=D_4=D_3,
\end{cases}
\end{equation}
where we use $b_ia_{ij}=b_ja_{s+1-j,s+1-i}$ in the second equality of \eqref{eq:c1}.

By combining the order conditions of \eqref{2.3} and \eqref{eq:c1}, we have
\begin{equation}\label{2.10}
SSRK(5):
\begin{cases}
B(5):\eqref{2.7},\\
\Phi^{s*}=\Phi^*:\eqref{2.4},\\
D(1):\eqref{2.6}, C(1):\eqref{2.5},\\
D(2):\eqref{2.9}.
\end{cases}
\end{equation}
By \eqref{2.10}, as soon as $a_{32}, a_{42}, a_{43}, a_{52}$ are determined, then the rest of the coefficients of $\Phi$ can be determined accordingly.

Since $R(z)$ is a 5th order approximation to $e^z$, namely,
\begin{equation}\label{2.11}
R(z)=\sum_{i=1}^5\frac{1}{i}z^i+\frac{\alpha}{6!}z^6,\ \ z=\lambda h,
\end{equation}
then by comparing the coefficients, we have
\begin{equation*}
\begin{cases}
\displaystyle{t_{48}: \sum b_ja_{jk}a_{kl}c_l=\frac{1}{24}\Rightarrow b_3(1-c_3)A_3+b_4(1-c_4)A_4=\frac{1}{24}-D_5(1-c_5),}\\
\displaystyle{t_{517}: \sum b_ja_{jk}a_{kl}a_{lm}c_m=\frac{1}{120}\Rightarrow(D_3-b_3A_3)A_3+(D_4-b_4A_4)A_4=\frac{1}{120},}\\
\displaystyle{t_{620}: \sum b_ja_{jk}a_{kl}a_{lm}a_{mp}c_p=\frac{\alpha}{720}\Rightarrow (D_4-b_4A_4)a_{43}A_3=\frac{\alpha}{720}.}
\end{cases}
\end{equation*}
By $\Phi^{s*}=\Phi^*$, we can further derive that
\begin{equation}\label{2.11}
\begin{cases}
\displaystyle{b_3(1-c_3)A_3+b_4(1-c_4)A_4=\frac{1}{24}-D_2c_2,}\\
\displaystyle{b_3A_3A_4=\frac{1}{240},}\medskip\\
\displaystyle{b_3a_{43}A_3^2=\frac{\alpha}{720}.}
\end{cases}
\end{equation}
Finally, we can verify by straightforward calculations that if $SSRK(5)$ and \eqref{2.11} are fulfilled, then the order conditions listed in $ERK(5)$ are all fulfilled. That is, the Runge-Kutta method $\Phi$ is of order 5.

By our derivation so far, it suffices for us to consider the following system of linear equations,
\begin{equation}\label{2.13}
\begin{cases}
\displaystyle{b_3(1-c_3)a_{32}+b_4(1-c_4)a_{42}+b_5(1-c_5)a_{52}=D_2},\\
\displaystyle{b_3A_3+b_4A_4=D_4=D_3,}\\
\displaystyle{b_3(1-c_3)A_3+b_4(1-c_4)A_4=\frac{1}{24}-D_2c_2,}\\
\displaystyle{2b_3A_3A_4=\frac{1}{120}.}
\end{cases}
\end{equation}
Set $X=b_4A_4, Y=b_3A_3$, by the 2nd and 3rd relations in \eqref{2.13}, we can obtain
\begin{equation}\label{2.14}
\begin{cases}
\displaystyle{X=\frac{D_3(1-c_3)-(\frac{1}{24}-D_2c_2)}{1-2c_3}},\\
\displaystyle{Y=\frac{\frac{1}{24}-(D_2c_2+D_3c_3)}{1-2c_3}.}
\end{cases}
\end{equation}
From $\frac{1}{24}=\frac{1}{2}\cdot\frac{1}{12}=\frac{1}{2}(b_2c_2(1-c_2)+b_3c_3(1-c_3))$, we can further deduce that
\begin{equation}\label{2.15}
\begin{cases}
\displaystyle{X=\frac{(\frac{1}{2}-c_3)(1-c_3)^2b_3-(\frac{1}{2}-c_2)c_2^2b_2}{1-2c_3},}\\
\displaystyle{Y=\frac{(\frac{1}{2}-c_2)c_2^2b_2+(\frac{1}{2}-c_3)c_3^2b_3}{1-2c_3}.}
\end{cases}
\end{equation}
Substituting \eqref{2.15} into the 4th equation in \eqref{2.13}, namely $XY=b_3/240$, one then has
\begin{align*}
& \left\{\left(\frac{1}{2}-c_3\right)(1-c_3)^2b_3-\left(\frac{1}{2}-c_2\right)c_2^2b_2\right\}\cdot\left\{\left(\frac{1}{2}-c_2\right)c_2^2b_2+\left(\frac{1}{2}-c_3\right)c_3^2b_3\right\}\\
=& \frac{1}{240}b_3(1-2c_3)^2.
\end{align*}
From 
\[
\frac{1}{240}=\frac{1}{4}\cdot\frac{1}{60}=\frac{1}{4}(b_2c_2^2(1-c_2)^2+b_3c_3^2(1-c_3)^2),
\] 
we can further deduce that
\[
\begin{split}
&\left\{\left(\frac{1}{2}-c_3\right)(1-c_3)^2-\left(\frac{1}{2}-c_2\right)c_2^2\frac{b_2}{b_3}\right\}\cdot\left\{\left(\frac{1}{2}-c_2\right)c_2^2\frac{b_2}{b_3}+\left(\frac{1}{2}-c_3\right)c_3^2\right\}\\
=&\left(\frac{1}{2}-c_3\right)\left\{c_2^2(1-c_2)^2\frac{b_2}{b_3}+c_3^2(1-c_3)^2\right\}.
\end{split}
\]
By comparing the coefficients in the above equation, one has,
\begin{equation*}
\frac{b_2}{b_3}c_2^4\left\{\left(\frac{1}{2}-c_3\right)^2+\frac{b_2}{b_3}\left(\frac{1}{2}-c_2\right)^2\right\}=0,\ b_i\neq0, i=1,...,6,\ c_2\neq 0,
\end{equation*}
which gives 
\begin{equation}\label{2.16}
b_2\left(\frac{1}{2}-c_2\right)^2+b_3\left(\frac{1}{2}-c_3\right)^2=0.
\end{equation}
Substituting $b_2$ and $b_3$ in \eqref{2.8} into \eqref{2.16}, we have
\begin{equation*}
(\frac{1}{2}-c_2)^2\frac{\frac{1}{12}c_3(1-c_3)+\frac{1}{60}}{c_2(1-c_2)}+(\frac{1}{2}-c_3)^2\frac{\frac{1}{12}c_2(1-c_2)-\frac{1}{60}}{c_3(1-c_3)}=0,
\end{equation*}
which  yield
\begin{equation}\label{2.17}
(1-2c_2)^2c_3(1-c_3)\left(-c_3(1-c_3)+\frac{1}{5}\right)+(1-2c_3)^2c_2(1-c_2)\left(c_2(1-c_2)-\frac{1}{5}\right)=0.
\end{equation}
It follows by a straightforward calculation from \eqref{2.17} that
\begin{equation}\label{2.18}
c_2(1-c_2)+c_3(1-c_3)-4c_2(1-c_2)c_3(1-c_3)-\frac{1}{5}=0.
\end{equation}
Plugging \eqref{2.18} into \eqref{2.8}, one obtains that $b_1=b_6=\frac{1}{6}$. Next, \eqref{2.18} can be reformulated as
\begin{equation*}
c_3^2-c_3+\frac{\frac{1}{5}-c_2(1-c_2)}{1-4c_2(1-c_2)}=0,
\end{equation*}
which readily gives
\begin{equation}\label{2.19}
c_3=\frac{1}{2}-\frac{\sqrt{5}}{10(1-2c_2)},\ 0<c_2<\frac{1}{2}.
\end{equation}

Finally, by $b_1=b_6=\frac{1}{6}$ and \eqref{2.8}, we have
\begin{equation}\label{2.20}
\begin{cases}
\displaystyle{b_1=b_6=\frac{1}{6},}\\
\displaystyle{b_2=b_5=\frac{-\frac{1}{12}(1-2c_3)^2}{c_3(1-c_3)-c_2(1-c_2)}},\medskip\\
\displaystyle{b_3=b_4=\frac{\frac{1}{12}(1-2c_2)^2}{c_3(1-c_3)-c_2(1-c_2)}\ \ \mbox{where}\ c_3=\frac{1}{2}-\frac{\sqrt{5}}{10(1-2c_2)}.}
\end{cases}
\end{equation}
By
\[
b_3a_{32}c_2=Y=\{\frac{1}{24}-(D_2c_2+D_3c_3)/(1-2c_3)\},
\]
one can solve to obtain $a_{32}$. By
\[
b_4(a_{42}c_2+a_{43}c_3)=X=\{D_3(1-c_3)-(\frac{1}{24}-D_2c_2)\}/(1-2c_3),
\]
one can take $a_{42}$ or $a_{43}$ as free parameters. By

\begin{align*}
b_2a_{52}c_2=D_2-b_3(1-c_3)a_{32}-b_3c_3a_{42},
\end{align*}
one can solve it to obtain $a_{52}$. The rest of the coefficients can be obtained by $SSRK(5)$ in \eqref{2.10}.

Taking $c_2$ as a free parameter, the whole construction procedure can be summarised as follows,

\begin{align*}
&(1)~c_1=0,c_6=1;\medskip\\
&(2)~c_3=\frac{1}{2}-\frac{\sqrt{5}}{10(1-2c_2)}, c_4=1-c_3, c_5=1-c_2;\medskip\\
&(3)~b_1=b_6=\frac{1}{6};\\
&(4)~b_5=b_2=\frac{-\frac{1}{12}(1-2c_3)^2}{c_3(1-c_3)-c_2(1-c_2)}, b_3=b_4=\frac{\frac{1}{12}(1-2c_2)^2}{c_3(1-c_3)-c_2(1-c_2)};\medskip\\
&(5)~Y=\frac{\frac{1}{96}(\frac{1}{5}+(1-2c_2)^2)-\frac{\sqrt{5}}{480}(1+(1-2c_2)^2)}{c_3(1-c_3)-c_2(1-c_2)};\\
&(6)~a_{32}=\frac{Y}{b_3c_2}=\frac{(\sqrt{5}-1)(\sqrt{5}(1-2c_2)^2-1)}{40c_2(1-2c_2)^2}, a_{31}=c_3-a_{32}=\frac{1}{2}-\frac{\sqrt{5}}{10(1-2c_2)}-a_{32};\\
&(7)~a_{43}=\frac{\alpha b_3}{720Y^2}=\frac{\alpha(1+\sqrt{5})^2(1+\sqrt{5}(1-2c_2)^2)}{12(\sqrt{5}(1-2c_2)^2-1)}, \alpha=\frac{1}{2};\\
&(8)~(a_{42}c_2+a_{43}c_3)=\frac{X}{b_3}=\{(1+\sqrt{5})(1+\sqrt{5}(1-2c_2)^2)\}/40(1-2c_2)^2;\\
&(9)~a_{42}=\frac{(1+\sqrt{5})(1+\sqrt{5}(1-2c_2)^2)}{40c_2(1-2c_2)^2}-\frac{a_{43}c_3}{2};\\
&(10)~a_{41}=(1-c_3)-(a_{42}+a_{43});\\
&(11)~a_{52}=\frac{\frac{1}{4}(1+(1-2c_2)^2)+\frac{(1-2c_2)^2}{(1-2c_3)^2}(1-c_3)a_{32}+c_3a_{42}}{c_2};\\
&(12)~a_{53}=b_3a_{42}/b_2, a_{54}=b_3a_{32}/b_2, a_{51}=(1-c_2)-a_{52}-b_3(a_{42}+a_{43})/b_2;\\
&(13)~a_{65}=b_2a_{21}/b_1,a_{64}=b_3a_{31}/b_1, a_{63}=b_3a_{41}/b_1,\\
&\qquad a_{62}=b_2a_{51}/b_1, a_{61}=1-(a_{62}+a_{63}+a_{64}+a_{65}).
\end{align*}

\subsection{Examples}

We next present three specific examples by following the above construction procedure.

\begin{exmp}\label{exm2.1}
Set $c_2=\frac{1}{2}(1-\frac{\sqrt{5}}{3})$. Then $c_3=\frac{1}{5}, b_2=-\frac{81}{132}, b_3=\frac{125}{132}$. Take $\alpha=\frac{1}{2}$. \medskip
\begin{equation*}
\begin{tabular}{c|cccccc}
0 & 0 & 0 & 0 & 0 & 0 & 0 \\
$\frac{1}{2}(1-\frac{\sqrt{5}}{3})$ & $\frac{1}{2}(1-\frac{\sqrt{5}}{3})$ & 0 & 0 & 0 & 0 & 0 \\
$\frac{1}{5}$ & $\frac{-2+3\sqrt{5}}{50}$ & $\frac{3(4-\sqrt{5})}{50}$ & 0 & 0 & 0 & 0 \\
$\frac{4}{5}$ & $\frac{99-8\sqrt{5}}{150}$ & $-\frac{2071+933\sqrt{5}}{1100}$ & $\frac{267+119\sqrt{5}}{12\times11}$ & 0 & 0 & 0 \\
$\frac{1}{2}(1+\frac{\sqrt{5}}{3})$ & $\frac{5+39\sqrt{5}}{81\times 4}$ & $-\frac{1218+805\sqrt{5}}{22\times27}$ & $\frac{5(2071+933\sqrt{5})}{81\times 44}$ & $-\frac{5(4-\sqrt{5})}{54}$ & 0 & 0 \\
$1$ & $-\frac{15+5\sqrt{5}}{24}$ & $-\frac{5+39\sqrt{5}}{8\times11}$ & $\frac{5(99-8\sqrt{5})}{12\times11}$ & $\frac{5(-2+3\sqrt{5})}{44}$ & $-\frac{27(3-\sqrt{5})}{44}$ & 0\\
\hline
 $ $  & $\frac{1}{6}$ & $-\frac{81}{132}$ & $\frac{125}{132}$ & $\frac{125}{132}$ & $-\frac{81}{132}$ & $\frac{1}{6}$
\end{tabular}
\end{equation*}
\end{exmp}

\begin{exmp}\label{exm2.3}
Set $c_2=\frac{1}{2}(1-\frac{2}{5}\sqrt{5})$. Then $c_3=\frac{1}{4}$, $b_2=-\frac{5}{33}$, $b_3=\frac{16}{33}$. Take $\alpha=\frac{1}{2}$. \medskip
\begin{equation*}
\begin{tabular}{c|cccccc}
0 & 0 & 0 & 0 & 0 & 0 & 0 \\
$\frac{1}{2}(1-\frac{2\sqrt{5}}{5})$ & $\frac{1}{2}(1-\frac{2\sqrt{5}}{5})$ & 0 & 0 & 0 & 0 & 0 \\
$\frac{1}{4}$ & -$\frac{3+\sqrt{5}}{16}$ & $\frac{7+\sqrt{5}}{16}$ & 0 & 0 & 0 & 0 \\
$\frac{3}{4}$ & $\frac{45+5\sqrt{5}}{48}$ & $-\frac{511+235\sqrt{5}}{48\times11}$ & $\frac{103+45\sqrt{5}}{12\times11}$ & 0 & 0 & 0 \\
$\frac{1}{2}(1+\frac{2\sqrt{5}}{5})$ & $-\frac{715+308\sqrt{5}}{15\times11}$ & $\frac{1035+278\sqrt{5}}{10\times33}$ & $\frac{511+235\sqrt{5}}{11\times15}$ & $-\frac{7+\sqrt{5}}{5}$ & 0 & 0 \\
$1$ & $-\frac{1694+726\sqrt{5}}{3\times121}$ & $\frac{1430+616\sqrt{5}}{3\times121}$ & $\frac{90+10\sqrt{5}}{3\times11}$ & $-\frac{6+2\sqrt{5}}{11}$ & $-\frac{5-2\sqrt{5}}{11}$ & 0\\
\hline
 $ $  & $\frac{1}{6}$ & $-\frac{5}{33}$ & $\frac{16}{33}$ & $\frac{16}{33}$ & $-\frac{5}{33}$ & $\frac{1}{6}$
\end{tabular}
\end{equation*}
\end{exmp}

\begin{exmp}\label{exm2.4}
Set $c_2=\frac{1}{4}$. Then $c_3=\frac{1}{2}(1-\frac{2}{\sqrt{5}})$, $b_2=\frac{16}{33}$, $b_3=-\frac{5}{33}$. Take $\alpha=\frac{1}{2}$. \medskip
\begin{equation*}
\begin{tabular}{c|cccccc}
0 & 0 & 0 & 0 & 0 & 0 & 0 \\
$\frac{1}{4}$ & $\frac{1}{4}$ & 0 & 0 & 0 & 0 & 0 \\
$\frac{1}{2}(1-\frac{2\sqrt{5}}{5})$ & $\frac{-4+3\sqrt{5}}{10}$ & $\frac{9-5\sqrt{5}}{10}$ & 0 & 0 & 0 & 0 \\
$\frac{1}{2}(1+\frac{2\sqrt{5}}{5})$ & $\frac{11-\sqrt{5}}{60}$ & $\frac{181+92\sqrt{5}}{11\times15}$ & $-\frac{103+45\sqrt{5}}{12\times11}$ & 0 & 0 & 0 \\
$\frac{3}{4}$ & $\frac{19+3\sqrt{5}}{96}$ & $\frac{621-7\sqrt{5}}{11\times48}$ & -$\frac{181+92\sqrt{5}}{11\times48}$ & $-\frac{9-5\sqrt{5}}{32}$ & 0 & 0 \\
$1$ & $\frac{-3+\sqrt{5}}{6}$ & $\frac{19+3\sqrt{5}}{33}$ & $-\frac{11-\sqrt{5}}{66}$ & $\frac{4-3\sqrt{5}}{11}$ & $\frac{8}{11}$ & 0\\
\hline
 $ $  & $\frac{1}{6}$ & $\frac{16}{33}$ & $-\frac{5}{33}$ & $-\frac{5}{33}$ & $\frac{16}{33}$ & $\frac{1}{6}$
\end{tabular}
\end{equation*}
\end{exmp}

\subsection{Numerical experiments}
In this subsection, we conduct some numerical experiments to verify the orders of the explicit Runge-Kutta methods constructed in the previous subsection. To that end, we consider the following nonlinear problem (\cite{Butcher2008}, p.~57),
\begin{equation}\label{nonlineareq1-butcher}
\begin{cases}
\displaystyle{\frac{dy_1}{dt}=y_3,}\medskip\\
\displaystyle{\frac{dy_2}{dt}=y_4,}\medskip\\
\displaystyle{\frac{dy_3}{dt}=-\frac{y_1}{(y_1^2+y_2^2)^{3/2}}},\medskip\\
\displaystyle{\frac{dy_4}{dt}=-\frac{y_2}{(y_1^2+y_2^2)^{3/2}}},\medskip\\
y(0)=(1,0,0,1)^T,
\end{cases}
\end{equation}
The exact solution of \eqref{nonlineareq1-butcher} is $y(t)=(\cos t,\sin t, -\sin t, \cos t)^T$. Let RK1, RK2, RK3 denote the explicit Runge-Kutta methods in Examples \ref{exm2.1}, \ref{exm2.3} and \ref{exm2.4}, respectively. The numerical results of the three methods applied to \eqref{nonlineareq1-butcher} are listed in Table 1. We compare the exact solution and the numerical solution at the time point $T=1$ with different stepsizes $h$. The error is defined by $\mbox{error}(h)=\sqrt{\sum\limits_{i=1}^4(y_i(T)-x_i^M)^2}$, where $x_i^M$ with $M=T/h$ is the approximation of the exact solution $y_i(T)$. The order is obtained by $\log_2\frac{\mbox{error}(h)}{\mbox{error}(h/2)}$. Table 1 shows that all the three methods RK1, RK2 and RK3 are convergent with order 5.

\bigskip

\begin{center}
\begin{tabular}{|c| c c| c c| c c|}
\multicolumn{7}{c} {Table 1. Order results of RK1, RK2 and RK3}\\
\hline\\
&\multicolumn{2}{c|}{RK1}&\multicolumn{2}{c|}{RK2}&\multicolumn{2}{c|}{RK3}\\
\hline\\
$h$&error&order&error&order&error&order\\
\hline\\
0.2&1.552315e-06&&3.557650e-06&&1.116439e-06& \\
0.1&4.647329e-08&5.06&9.304931e-08&5.26&3.678888e-08&4.92 \\
0.05&1.419250e-09&5.03&2.608325e-09&5.16&1.185410e-09&4.96 \\
0.025&4.3829821e-11&5.01&7.686324e-11&5.08&3.763568e-11&4.98 \\
0.0125&1.360179e-12&5.01&2.329748e-12&5.04&1.187870e-12&4.99 \\
0.00625&4.215618e-14&5.01&7.072748e-14&5.04&3.517603e-14&5.08 \\
\hline
\end{tabular}
\end{center}

\section*{Acknowledgment}
 The work of S. Gan was supported by the NSF of China, No. 11571373, 11671405 and 91630312. The work of H. Liu was supported by the FRG and startup grants from Hong Kong Baptist University, Hong Kong RGC General Research Funds, 12302415 and 12302017. The work of Z. Shang was supported by the NSF of China, No. 11671392.


\begin{thebibliography}{99}

\bibitem{Burrage79}
K. Burrage, J. C. Butcher, \emph{Stability criteria for implicit Runge--Kutta methods}, SIAM J. Numer. Anal., \textbf{16} (1979), 46--57.

\bibitem{Butcher87}
J. C. Butcher, \emph{The numerical analysis of ordinary differential equations: Runge-Kutta and general linear methods}, Wiley-Interscience, 1987, 515.

\bibitem{Butcher64}
J. C. Butcher, \emph{On Runge-Kutta processes of high order}, J. Australian Math. Soc., \textbf{4(2)} (1964), 179--194.

\bibitem{Butcher65}
J. C. Butcher, \emph{On the attainable order of Runge-Kutta methods}, Math. Comp., \textbf{19(91)} (1965), 408--417.

 \bibitem{Butcher85}
J. C. Butcher, \emph{The non-existence of ten stage eighth order explicit Runge-Kutta methods}, BIT Numer. Math., \textbf{25(3)} (1985), 521--540.


\bibitem{Butcher642}
J. C. Butcher, \emph{Implicit Runge-Kutta processes}, Math. Comp., \textbf{18(85)} (1964), 50--64.

\bibitem{Butcher2008} J. C. Butcher, \emph{Numerical Methods for Ordinary Differential Equations}, John Wiley \& Sons, Second Edition, 2008. 

\bibitem{Ceschino66}
F. Ceschino and J. Kuntzmann, \emph{Numerical solution of initial value problems}, Prentice Hall, 1966.


\bibitem{Cooper72}
 G. J. Cooper and J. H. Verner, \emph{Some Explicit Runge-Kutta Methods of High Order}, SIAM J. Numer. Anal., \textbf{9(3)} (1972), 389--405.

\bibitem{Crouzeix79}
 M. Crouzeix, \emph{Sur la B-stabilit$\acute{e}$ des m$\acute{e}$thodes de Runge-Kutta}, Numer. Math., \textbf{32(1)} (1979), 75--82.

\bibitem{Curtis70}
A. R. Curtis, \emph{An eighth order Runge-Kutta process with eleven function evaluations per step}, Numer. Math., \textbf{16(3)} (1970), 268--277.

\bibitem{Curtis75}
A. R. Curtis, \emph{High-order explicit Runge-Kutta formulae, their uses, and limitations}, IMA J. Appl. Math.,\textbf{16(1)} (1975), 35--52.

\bibitem{Euler68}
L. Euler, \emph{Institutionum Calculi Integralis}, Volumen Primum, Opera 11, Sectio Secunda, Caput VII, 1768, p.~424

\bibitem{Feng84}
K. Feng, \emph{On difference schemes and symplectic geometry} , Proceeding of the 5th International Symposium on  Differential Geometry and Differential Equations, August 1984, Beijing, 1984, 42--58.

\bibitem{Gan13}
S. Gan, Z. Shang and G. Sun, \emph{ A class of symplectic partitioned Runge-Kutta methods}, Appl. Math. Lett., \textbf{26(9)} (2013), 968--973.

\bibitem{Hairer78}
E. Hairer, \emph{A Runge-Kutta method of order 10}, IMA J. Appl. Math., \textbf{21(1)} (1978), 47--59.

\bibitem{Hairer02}
E. Hairer, C. Lubich and G. Wanner, \emph{Geometric numerical integration}, Springer, 2002.

\bibitem{Hairer83}
E. Hairer, S. N$\phi$rsett and G. Wanner, \emph{Solving ordinary differential equations. I}, 2nd Edition, Springer-Verlag, Berlin Heilderberg, 1983. 

\bibitem{Hairer91}
 E. Hairer and G. Wanner, \emph{Solving ordinary differential equations. II}, Springer-Verlag, 1991. 

\bibitem{Hairer81}
E. Hairer and G. Wanner, \emph{Algebraically stable and implementable Runge-Kutta methods of high order} SIAM J. Numer. Anal., \textbf{18(6)} (1981), 1098--1108.


\bibitem{Kutta}
 W. Kutta, \emph{Beitrag zur n\"{a}herungweisen Integration totaler Differentialgleichungen}, Zeitschr f\"{u}r Math. u. Phvs. \textbf{46}. (1901), pp.435--453.

\bibitem{Lasagni88}
F. M. Lasagni, \emph{Canonical Runge-Kutta methods}, Zeitschrift f\"{u}r Angewandte Mathematik und Physik (ZAMP), \textbf{39(6)} (1988),952--953.


\bibitem{Norsett81}
S. P. N$\phi$rsett and G. Wanner, \emph{Perturbed collocation and Runge-Kutta methods}, Numer. Math., \textbf{38(2)} (1981), 193--208.

\bibitem{Nystrom25}
E. J. Nystr$\ddot{\mathrm{o}}$m, \emph{$\ddot{U}$ber die numerische Integration von Differentialgleichungen}, Acta Soc. Sci. Fenn., \textbf{50} (1925), pp.1-54.



\bibitem{Sanz88}
J. M. Sanz-Serna, \emph{Runge-Kutta schemes for Hamiltonian systems}, BIT Numer. Math., \textbf{28(4)} (1988), 877-883.

\bibitem{Serna92}
J. M. Sanz-Serna, \emph{The numerical integration of Hamiltonian systems}, Computational ODEs, Edited by J.R Cash and I. Gladwell, Charendon Press, Oxford, pp. 437--449. 

\bibitem{Scherer79}
R. Scherer, \emph{A necessary condition for B-stability}, BIT Num. Math., \textbf{19(1)} (1979),111--115.

\bibitem{Shanks66}
E. B. Shanks, \emph{Solutions of differential equations by evaluations of functions}, Math. Comput., \textbf{20(93)} (1966), 21--38.

\bibitem{Stetter73}
H. J. Stetter, \emph{Analysis of discretization methods for ordinary differential equations}, 1973, Springer, Berlin-Heidelberg-New York.

%
%

\bibitem{Suris89}
Y. B. Suris, \emph{The canonicity of mappings generated by Runge-Kutta type methods when integrating the systems $\ddot{x}=-\frac{\partial U}{\partial s}$}, USSR Computational Mathematics and Mathematical Physics, \textbf{29(1)} (1989), 138--144.

\bibitem{Suris90}
 Y. B. Suris, \emph{(2-LENIP) Hamiltonian methods of Runge-Kutta type and their variational interpretation} (Russian English summary), Mat. Model., \textbf{2} (1990), 78--87.

\bibitem{Verner71}
J. H. Verner, \emph{On deriving explicit Runge-Kutta methods}, in Conference on Applications of Numerical Analysis, Springer, Berlin-Heidelberg, (1971), pp. 340--347.

\bibitem{Wanner73}
G. Wanner, \emph{Runge-Kutta-methods with expansion in even powers of h}, Computing, \textbf{11(1)} (1973), 81--85.



\end{thebibliography}
\end{document}